\documentclass[11pt]{article}

\usepackage{amsmath,amssymb,amsthm}

\usepackage{graphicx}
\usepackage{subfigure}

\usepackage{caption}
\usepackage{cmap}
\usepackage{epstopdf}
\usepackage{longtable}
\usepackage{booktabs}
\usepackage{dsfont}
\usepackage[top=1in , bottom =1in , left=1in , right =1in]{geometry}

\makeatletter
\newcommand\figcaption{\def\@captype{figure}\caption}
\newcommand\tabcaption{\def\@captype{table}\caption}
\makeatother
\usepackage{picinpar}
\usepackage{array}
\usepackage{indentfirst}    
\usepackage{listings}
\usepackage{setspace}
\usepackage{xcolor}

\newtheorem{Theorem}{Theorem}[section]

\newtheorem{lem}[Theorem]{Lemma}

\theoremstyle{definition}

\newtheorem{defi}[Theorem]{Definition}
\newtheorem*{rmk}{Remark}

\title{\bf On the~$m$-point convexity}

\author{Wenzhi Liu$^a$\ \ \  Wei Wang$^b$\ \ \  Liping Yuan$^{a, b, c, d}$\thanks{Corresponding author. Email: wenzhiliu0601@163.com (WL); 15733133775@163.com (WW); lpyuan@hebtu.edu.cn (LY); tuzamfirescu@gmail.com (TZ).} \ \ \  Tudor Zamfirescu$^{a, e, f}$}

\begin{document}\large
	\date{}
	\vskip -8cm
	\maketitle

{\small $a$. School of Mathematical Sciences,
Hebei Normal University,
050024 Shijiazhuang, P.R. China.

$b$. Hebei International Joint Research Center for Mathematics and Interdisciplinary Science,
050024 Shijiazhuang, P.R. China.

$c$. Hebei Key Laboratory of Computational Mathematics and Applications,
050024 Shijiazhuang,  P. R. China.

$d$. Hebei Research Center of the Basic Dicipline Pure Mathematics, 050024 Shijiazhuang,  P. R. China.

$e$. Fakult\"{a}t f\"{u}r Mathematik, TU Dortmund,
44221 Dortmund, Germany.

$f$.  Roumanian
Academy, 010071 Bucharest,
Roumania.}

	\begin{abstract}
		
Let $S\subset \mathbb{R}^d$ $(d\geq 2)$. A set $S$ is said to be $m$-point convex, if for every $m$ distinct points in $S$, at least one of the line-segments determined by them lies in $S$. We also say that $S$ has property $P_m$. Let~${x,y,z}\in \mathbb{R}^{d}$.~If $\mathrm{conv}\{x,y,z\}$ is a right triangle,~then~$\{x,y,z\}$~is called a {\it right triple}. A set~$S$~is said to have the {\it right-$3$-point property},~if, for every right triple of~$S$,~at least one of the line-segments determined by them belongs to~$S$. In particular, it has the {\it double right-$3$-point property},~if, for every right triple in~$S$,~at least two of the line-segments determined by them belong to~$S$. In this paper, we further investigate $m$-point convex sets and establish the relationship between the sets with the double right-$3$-point property and convex sets in $\mathbb{R}^d$.
		
		\bigskip
		
		\textbf{Keywords:} $m$-point convexity; $m$-starshaped sets; right-$3$-point property; double right-$3$-point property; right triple.
		
		\bigskip
		
		{\small {\bf 2020 MSC:} 52A01.}
	\end{abstract}
	
	\section{Introduction}
	In 1957, Valentine \cite{ref2} introduced the notion of $3$-point convexity and studied the decomposition of $3$-point convex sets in the plane. In 1967, Guay \cite{GM} generalized the concept of $3$-point convexity to $m$-point convexity (calling it $m$-convexity). In 1970, Guay and Kay established a connection between $m$-point convex sets and $L_n$ sets. The decomposition of $m$-point convex sets plays a crucial role in $m$-point convexity. Breen studied it in the plane \cite{BM1} and in $\mathbb{R}^d$ \cite{BM3}. In 1990, Perles and Shelah \cite{PS} proved that a closed $m$-point convex set in the plane can be covered by at most $(m-1)^6$ convex sets. In 1999, Matou\v sek and Valtr \cite{MV} improved this bound to $18(m-1)^3$. In 2013, Nitzan and Perles \cite{NP} proved that a $3$-point convex set in the plane can be decomposed into six convex sets.
	
	In 1962, Marr \cite{ref1} introduced the isosceles $3$-point property and studied its relationship with convex sets and $3$-point convex sets. In 1994, Latecki, Rosenfeld and Silverman \cite{LRS} proposed the concept of $CP_3$, the property that for every three collinear points in the set, at least one of the line-segments determined by them lies in the set, and studied the relationship between sets with this property and convex sets.
	
	In this paper, we further investigate $m$-point convex sets and $m$-starshaped sets. Furthermore, we introduce the right-$3$-point property and the double right-$3$-point property, and investigate the relationship between the sets with the double right-$3$-point property and convex sets in $\mathbb{R}^d$.
	
	\section{Definitions and notation}
	Let~$S$~be a set in~$\mathbb{R}^{d}$~and~$m\geq 2$.~The set~$S$~is said to be~{\it$m$-point convex},~if, for every~$m$~distinct points in~$S$,~at least one of the line-segments determined by them lies in~$S$~\cite{ref7}.~We also say that~$S$~has {\it property}~$P_m$. Hence,~$2$-point convexity means usual convexity.
	
	Let~${x,y,z}$~in~$\mathbb{R}^{d}$.~If $\mathrm{conv}\{x,y,z\}$ is a right triangle,~then~$\{x,y,z\}$~is called a {\it right triple}~\cite{Y}.
	
	A set~$S$~has the {\it right-$3$-point property},~if, for every right triple of~$S$,~at least one of the line-segments determined by them belongs to~$S$. In particular, it has the {\it double right-$3$-point property},~if, for every right triple in~$S$,~at least two of the line-segments determined by them belong to~$S$.
	
	Generalizing the usual notion of an extreme point from convexity, we say that a point~$x$~in~$S\subset\mathbb{R}^{d}$~is an {\it extreme point} of~$S$~if there exists no nondegenerate line-segment in~$S$~that contains~$x$~in its relative interior.
	
	A set~$S$~is {\it $m$-starshaped} relative to a point~$x\in S$,~if for any~$m-1$~points~$x_1$, $x_2,...,x_{m-1}\in S$,~at least one of the line-segments ~$xx_1$,$xx_2$,...,$xx_{m-1}$ lies in $S$.~The points which can play the role of~$x$~form the~$m$-{\it kernel} $\mathrm{ker}_{m}S$ of~$S$. Thus, 2-starshapedness means usual starshapedness, and $\mathrm{ker}_{2}S=\mathrm{ker}S$.
	
	A tiling graph is a tiling of the plane viewed as a graph. Given a tiling graph, we call a line~$L$~a {\it tiling line},~if~$L$~only consists of edges and vertices of that tiling graph.
	
	
	Let~$S$~be a set in~$\mathbb{R}^{d}$.~$S$~is said to be {\it locally convex} at a point~$q\in S$,~if there exists an open ball~$N$~with center at~$q$~such that~$S\cap N$~is convex.
	
	Put $a_{1}a_{2}\ldots a_{n}=\mathrm{conv}\{a_{1},a_{2},\ldots ,a_{n}\}$, $\overline{a_{1}a_{2}\ldots a_{n}}=\mathrm{aff}\{a_{1},a_{2},\ldots ,a_{n}\}$ and $\left [a_{1},a_{2},\ldots ,a_{n}\right ]=a_{1}a_{2}\cup a_{2}a_{3}\cup \ldots \cup a_{n-1}a_{n}$, for $a_{1},\ldots ,a_{n}\in \mathds{R}^{d}$.
	
	For distinct points~$x,y\in \mathbb{R}^d$,~let ${L}_{xy}$ denote the hyperplane through $(x+y)/ 2$ orthogonal to $\overline{xy}$, $H_{xy}$ the hyperplane through $x$ orthogonal to $\overline{xy}$,  $C_{xy}$ the  hypersphere of diameter $xy$, and $W_{xy}=(C_{xy}\cup H_{xy}\cup H_{yx})\setminus \{x,y\}$. Put~$(xy)=xy\backslash \{x,y\}$.
	
	For~$S\subset \mathbb{R}^{d}$,~we denote by~$\mathrm{cl}S,\mathrm{int}S,\mathrm{bd}S,\mathrm{conv}S,\mathrm{dim}S$~its closure,~relative interior,~relative boundary,~convex hull,~dimension,~respectively.
	
	\section{~$m$-point convexity}

%




%

	We start with two simple remarks.
	
		If~$S\subset\mathbb{R}^{d}$~is a convex set and~$p\in S$,~then~$S\backslash \{p\}$~is~$3$-point convex.

		In the tilings~$(3^6),(4^4)$~and~$(3,6,3,6)$, a subset~$S$~is~$3$-point convex~if and only if~$S$~is a connected subset of a tiling line or the union of two such subsets.

	\begin{Theorem}\label{6}
		If~$S\subset\mathbb{R}^{d}$~is a simple closed curve,~then~$S$~is~$(n+1)$-point convex if and only if~$S$~is a closed broken line with $m$ vertices,~where~$m\leq n$.
	\end{Theorem}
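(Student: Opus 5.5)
The plan is to prove the two implications separately. The ``if'' part is a pigeonhole argument. The ``only if'' part produces $n+1$ points of $S$, no two of which are joined by a segment lying in $S$; these points are chosen among the extreme points of $S$.

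\emph{Sufficiency.} Let $S$ be a closed broken line with vertices $v_1,\dots,v_m$, $m\le n$. Then $S$ is the union of the $m$ closed edges $v_1v_2,\dots,v_mv_1$. Given $n+1>m$ distinct points of $S$, two of them lie on a common edge. The line-segment joining them is contained in that edge, hence in $S$. Therefore $S$ has property $P_{n+1}$.

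\emph{Necessity: topological facts.} I will first record two facts about a simple closed curve $S$.

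(a) If a nondegenerate segment $xy$ lies in $S$, then $xy$ is one of the two arcs of $S$ with endpoints $x,y$. This holds because $xy$ is an arc contained in the circle $S$.

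(b) Let $E$ be the set of extreme points of $S$ in the sense of the paper. Then $E$ is closed in $S$. Moreover, every component of $S\setminus E$ is an open arc that is locally a segment near each of its points, and a connected curve that is locally straight is contained in one line. Hence each such component is a relatively open segment $(ab)$ with $a,b\in E$ (or possibly $a=b$).

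$E$ is nonempty, since otherwise $S$ would be locally straight everywhere and hence a subset of a line, which is impossible for a simple closed curve. If $E$ is finite with $k$ points, then $S$ is the union of $k$ segments joining cyclically consecutive points of $E$. So $S$ is a closed broken line whose vertices are exactly the points of $E$: no point of $E$ lies inside a segment of $S$, so the two incident segments are not collinear. Consequently, if $S$ is not a closed broken line with at most $n$ vertices, then $|E|\ge n+1$, with $E$ possibly infinite.

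\emph{Necessity: the contradiction.} Assume $|E|\ge n+1$.

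If $|E|\ge 2n+2$, pick $2n+2$ points of $E$ in cyclic order along $S$ and keep every second one. This gives $n+1$ points $x_1,\dots,x_{n+1}\in E$ such that each of the two arcs of $S$ between any $x_i$ and $x_j$ contains another point of $E$ in its relative interior. Suppose $x_ix_j\subset S$. By (a), $x_ix_j$ is one of these arcs, so it contains a point of $E$ in its relative interior. That contradicts the definition of an extreme point. Thus no segment $x_ix_j$ lies in $S$, and $P_{n+1}$ fails.

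If $n+1\le |E|<2n+2$, then $S$ is a closed broken line with $k=|E|\ge n+1$ vertices. Here I choose instead one point in the relative interior of each of $n+1$ distinct edges. A segment in $S$ joining two of these points is, by (a), an arc of $S$ passing through some vertex in its relative interior. This is again impossible, because vertices are extreme.

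\emph{Main obstacle.} I expect the main difficulty to be the careful proof of (b): that $E$ is closed, and that a locally straight open arc in $S$ is a single open segment with endpoints in $E$. For this I would take a maximal segment of $S$ through a non-extreme point. Maximality together with compactness of $S$ makes its endpoints belong to $E$, and (a) rules out any branching.
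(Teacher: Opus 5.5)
Your proof is correct, but it is organized differently from the paper's.

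The paper argues constructively from $P_{n+1}$. It uses the property to obtain a maximal segment $\Sigma_1\subset S$, then applies it again to points of $S\setminus\mathrm{int}\,\Sigma_1$ to get $\Sigma_2$, and so on. If $n$ such segments do not exhaust $S$, a leftover point $v$ together with the midpoints of $\Sigma_1,\dots,\Sigma_n$ violates $P_{n+1}$.

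You instead first describe an arbitrary simple closed curve through its set $E$ of extreme points, reduce the statement to $|E|\le n$, and then refute $|E|\ge n+1$ with explicit bad $(n+1)$-tuples. When $|E|\ge 2n+2$ you take alternate extreme points, which also handles non-polygonal curves. Otherwise you take interior points of distinct edges, which is essentially the paper's midpoint configuration.

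The paper's route is shorter and needs no a priori structure lemma like your (b), since the segments are produced by $P_{n+1}$ itself. However, it silently relies on exactly your fact (a). That fact is what shows a segment of $S$ starting at the midpoint of $\Sigma_i$ must run along $\Sigma_i$ and, on reaching a point off $\Sigma_i$, would extend it, contradicting maximality. It is also what ensures the next segment avoids $\mathrm{int}\,\Sigma_1$ once the points are chosen off $\Sigma_1$.

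Your route pays for the small topological argument in (b). In return it makes the vertex count intrinsic --- the vertices are precisely the extreme points, so collinear ``fake'' vertices cause no ambiguity. Its main case $|E|\ge 2n+2$ rests only on (a) and the definition of an extreme point.
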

	
	\begin{proof}
		The ``if\," implication is obvious.
		
		Next we prove the ``only if\," implication.
		
		Assume~$S\subset\mathbb{R}^{d}$~is a simple closed curve.~Choose any~$n+1$~points in~$S$.~At least one of the line-segments determined by them lies in~$S$.~We denote it by~$\Sigma$.~Let~$\Sigma_1$~be the longest line-segment in~$S$~which includes~$\Sigma$.~Now,~choose any~$n+1$~points in~$S\backslash\mathrm{int}\Sigma_1$~.~We get another (maximal) line-segment~$\Sigma_2$,~included in~$S\backslash \mathrm{int}\Sigma_1$.~Then,~we choose~$n+1$~points in~$S\backslash(\mathrm{int}\Sigma_1\cup \mathrm{int}\Sigma_2)$,~and continue this way,~until obtaining~$m$~line-segments~$\Sigma_1,\Sigma_2,...,\Sigma_m$,~the union~$V_m$~of which equals ~$S$. ~Indeed, if~$V_m\ne S$~for all~$m\leq n$,~consider a point~$v$~in~$S\backslash V_n$.~The property~$P_{n+1}$~is not verified at~$v$~and the midpoints of~$\Sigma_1,\Sigma_2,...,\Sigma_n$.~Hence,~$V_m=S$~for some~$m\leq n$,~and~$S$~is the broken line $V_m$.
	\end{proof}
	
	\begin{Theorem}
		Let~$S\subset\mathbb{R}^{d}$~be a simple arc.~Then~$S$~is~$m$-point convex if and only if~$S$~is the union of~$m-1$~line-segments.
	\end{Theorem}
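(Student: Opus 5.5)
The plan is to imitate the proof of Theorem~\ref{6}, replacing the cyclic structure of a closed curve by the linear order of an arc. The basic tool is this observation: if $S$ is a simple arc and $x,y\in S$ with $xy\subset S$, then $xy$ equals the subarc $S[x,y]$ of $S$ between $x$ and $y$. The reason is that $xy$ is itself an arc in $S$ joining $x$ and $y$, and a simple arc contains a unique subarc between two of its points. Two consequences follow.
\begin{itemize}
\item A line-segment in $S$ that meets a maximal line-segment $\Sigma\subset S$ in more than one point is collinear with $\Sigma$. Its union with $\Sigma$ is then a longer segment in $S$ unless it is contained in $\Sigma$.
\item Maximal segments exist and are closed, because $S$ is compact and any increasing chain of segments in $S$ has its closure in $S$.
\end{itemize}
Here and below, ``union of $m-1$ line-segments'' is read as ``union of at most $m-1$ line-segments''. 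A union of fewer segments can always be rewritten with exactly $m-1$ by subdividing one of them.

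\emph{``If''.} Let $S=\Sigma_1\cup\dots\cup\Sigma_{m-1}$ with each $\Sigma_i$ a segment. Among any $m$ distinct points of $S$, two lie in the same $\Sigma_i$ by pigeonhole. The segment they determine lies in $\Sigma_i\subset S$, so $S$ has $P_m$.

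\emph{``Only if''.} Assume $S$ has $P_m$. Choose maximal segments $\Sigma_1,\Sigma_2,\dots$ in $S$ inductively, with pairwise disjoint relative interiors, as follows.
\begin{enumerate}
\item Suppose $\Sigma_1,\dots,\Sigma_k$ have been chosen and $S\neq V_k:=\Sigma_1\cup\dots\cup\Sigma_k$.
\item Since $V_k$ is a finite union of closed segments, $S\setminus V_k$ is a nonempty relatively open subset of the arc. It therefore contains infinitely many points, and we pick $m$ of them.
\item By $P_m$, some segment $xy$ determined by these points lies in $S$. Let $\Sigma_{k+1}$ be a maximal segment of $S$ containing $xy$.
\item $\Sigma_{k+1}$ shares no relative interior point with any $\Sigma_i$, $i\le k$. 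Otherwise the two segments overlap in a nondegenerate piece, hence are collinear, and maximality forces $\Sigma_{k+1}=\Sigma_i$. That is impossible because $x\notin V_k$.
\end{enumerate}

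It remains to show that the process stops, i.e.\ $V_k=S$, for some $k\le m-1$. Suppose instead that $\Sigma_1,\dots,\Sigma_{m-1}$ have been constructed and there is a point $v\in S\setminus V_{m-1}$. Let $c_i$ be the midpoint of $\Sigma_i$, and apply $P_m$ to the $m$ distinct points $v,c_1,\dots,c_{m-1}$.
\begin{itemize}
\item Suppose $vc_i\subset S$. Then $vc_i=S[v,c_i]$, and this subarc contains one of the two halves of $\Sigma_i$. Hence $vc_i$ is collinear with $\Sigma_i$, and $\Sigma_i\cup vc_i$ is a segment in $S$ strictly longer than $\Sigma_i$ (it contains $v\notin\Sigma_i$). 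This contradicts maximality.
\item Suppose $c_ic_j\subset S$ with $i\ne j$. Then $S[c_i,c_j]$ contains a half of $\Sigma_i$ and a half of $\Sigma_j$, and is a single segment. So $\Sigma_i$ and $\Sigma_j$ are collinear, and $\Sigma_i\cup c_ic_j$ is a longer segment in $S$, again a contradiction.
\end{itemize}
Thus $P_m$ fails, and so $S=V_k$ for some $k\le m-1$, as claimed.

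The main obstacle is the step using the uniqueness of subarcs to show that a segment in $S$ through an interior point of a maximal segment must extend it. In particular one must check that $S[v,c_i]$ really contains a nondegenerate piece of $\Sigma_i$. This holds because $c_i$ is an interior point of the subarc $\Sigma_i$, and any subarc from $c_i$ to a point outside $\Sigma_i$ must run through one of the two halves of $\Sigma_i$. The endpoints of $S$ cause no trouble here: an endpoint of $S$ can lie in $\Sigma_i$ only as an endpoint of $\Sigma_i$, never as the midpoint $c_i$.
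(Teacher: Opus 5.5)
Your proposal is correct and follows essentially the same route as the paper. The paper proves this statement only by saying it is similar to the proof of Theorem~\ref{6}: greedily extract maximal segments $\Sigma_1,\Sigma_2,\dots$, then show that $P_m$ fails at a leftover point $v$ together with the midpoints $c_1,\dots,c_{m-1}$. Your argument does the same, and your use of the uniqueness of subarcs supplies what the paper leaves implicit, namely why a segment of $S$ through some $c_i$ would have to extend $\Sigma_i$, and why the $\Sigma_i$ have disjoint relative interiors, which makes the $c_i$ distinct.
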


	\begin{proof}
		The proof is similar to the proof of Theorem \ref{6}.
	\end{proof}

	\begin{Theorem}
		Let~$S\subset\mathbb{R}^{2}$~be a topological disc.~If~$S$~has at most one point at which it is not locally convex,~then~$S$~is~$3$-point convex.
		
	\end{Theorem}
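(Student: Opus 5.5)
We split into two cases according to whether $S$ is locally convex everywhere or has exactly one point $p$ at which local convexity fails.

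\textbf{Case 1: $S$ is locally convex at every point.} A topological disc is closed and connected. By Tietze's theorem, a closed connected set that is locally convex at each of its points is convex. Hence $S$ is convex, and in particular $3$-point convex.

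\textbf{Case 2: $S$ fails to be locally convex exactly at $p$.} Here we would like a structural description of $S$ near $p$ and globally. Tietze's argument adapts to sets with one exceptional point. A closed connected set in the plane that is locally convex at all points except $p$ should have the following form: its boundary is locally convex away from $p$, and at $p$ the disc has a single reflex ``notch''. Concretely, we aim to prove that $\mathrm{conv}S\setminus S$ lies in the open wedge at $p$ bounded by the two boundary directions emanating from $p$. Moreover, $S$ should be the union of two closed convex sets $K_1,K_2$ with $p\in K_1\cap K_2$, obtained by cutting $S$ along a suitable ray or segment from $p$ into the interior. To get this decomposition, we would work with the boundary curve $\mathrm{bd}S$, which is a simple closed curve. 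Away from $p$, local convexity means the curve turns consistently in one direction. Removing $p$ leaves an arc along which $S$ lies on one side locally. The two one-sided tangent rays at $p$ then split $S$ into two pieces, each closed and locally convex everywhere, including at the cut points, by construction. Tietze again shows each piece is convex.

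Once $S=K_1\cup K_2$ with each $K_i$ convex, $3$-point convexity follows from the pigeonhole principle. Among any three distinct points of $S$, two lie in the same $K_i$, and the segment joining them lies in $K_i\subset S$.

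\textbf{Main obstacle.} The hard part is Case 2: proving the decomposition into two convex sets rigorously. We would first consider the line through $p$ bisecting the reflex angle formed by the support directions of $\mathrm{bd}S$ at $p$. We would check that each closed half-plane intersected with $S$ is connected and locally convex at every point, including points on the cutting line. The subtle issues are verifying local convexity on the cut, and confirming that the cut does not meet $\mathrm{bd}S$ at other non-convex configurations. Local convexity of $S$ away from $p$ should rule out such configurations. If the bisector approach fails, the fallback is a direct argument: given three points $x,y,z$ with all three segments missing $S$, each missing segment must pass ``around'' the notch at $p$, and the three such segments cannot pairwise be blocked by a single reflex point.
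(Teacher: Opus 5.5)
Your strategy is the paper's: at the unique bad point $p$, cut $S$ along the bisector of the angle formed by the two boundary tangents, apply Tietze's theorem to each piece, and finish by pigeonhole. The paper also states the decomposition in a single sentence. But in your proposal Case 2, which is the whole content of the theorem, is only announced, and several announced steps are wrong as stated.

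First, the ``concrete'' goal that $\mathrm{conv}S\setminus S$ lies in the open wedge spanned by the two tangent rays at $p$ is false. Local convexity at boundary points near $p$ makes both boundary arcs curve towards $S$, so near $p$ the notch \emph{contains} that wedge. An example is the unit disc minus $\{(x,y):x<0,\ |y|<x^2-x\}$.

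Second, the tangent rays themselves run along the notch, in the closure of the complement, and do not split $S$. It is a line through $p$, such as the bisecting one, that enters $\mathrm{int}S$.

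Third, and most important, the check you propose can fail: that each \emph{closed} half-plane intersected with $S$ is connected and locally convex. Take $S=[-1,1]^2\setminus\{(x,y):-1\le x<0,\ -x^2/4<y<0\}$. Its only point of local nonconvexity is the cusp $p=(0,0)$. Both tangent rays point along the negative $x$-axis, so the cutting line is the $x$-axis. Yet $S\cap\{y\le 0\}$ contains the boundary segment $[-1,0]\times\{0\}$ and the points $(-t,-t^2/2)$, but not $(-t,-t^2/8)$. So it is not locally convex at $p$, and Tietze does not apply. Connectivity of the pieces is also left unverified, and you take the existence of one-sided tangents at $p$ for granted.

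To close the gap, first show that the one-sided tangents at $p$ exist. The tangent direction along a locally convex arc is monotone. Unbounded turning would make the boundary spiral into $p$, putting $S$ on the outer side of one of the two arcs and creating further points of local nonconvexity. Also show that the exterior angle $\alpha$ at $p$ is $<\pi$; otherwise $S$ would be locally convex at $p$.

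Then cut only along the chord $pb$, where $b$ is the first point of $\mathrm{bd}S$ on the ray from $p$ bisecting the reflex angle; near $p$ this ray lies in $\mathrm{int}S$. By the $\theta$-curve theorem the chord splits the disc into two closed discs $S_1,S_2$, which are therefore connected. Each $S_i$ is locally convex:
\begin{itemize}
\item at points of $(pb)$, since they have half-disc neighbourhoods;
\item at points of $\mathrm{bd}S\setminus\{p,b\}$, since $S_i$ coincides locally with $S$ there;
\item at $b$, since a chord through interior points cuts the convex set $S\cap N$ into two convex sets;
\item at $p$, since its angle there is $\pi-\alpha/2\le\pi$ and the adjacent boundary arc is locally convex.
\end{itemize}
Tietze's theorem then makes $S_1$ and $S_2$ convex, and your pigeonhole step finishes the proof. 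Your fallback argument, about three segments being ``blocked'' by one reflex point, is too vague to serve as a proof.
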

	
	\begin{proof}
		Let~$q\in S$~be a point at which~$S$~is not locally convex.~Choose~$p\in \mathrm{bd}S$~close to~$q$.~Take a point~$r$~close to~$q$~such that~$q\in \overset{\LARGE{\frown}}{pr}$.
		
		Let~$\tau_p$~be the tangent of~$\overset{\LARGE{\frown}}{qp}$~at~$q$~and~$\tau_r$~the tangent of~$\overset{\LARGE{\frown}}{qr}$~at~$q$.~Now take the bisector of the angle made by~$\tau_p,\tau_r$.

		The set~$S$~is divided by the bisector into two convex sets,~by Tietze's theorem~\cite{ref3}.~It follows that~$S$~is~$3$-point convex.
		
		If $S$ is everywhere locally convex, then it is convex and the conclusion is evident.
	\end{proof}
	
	\begin{Theorem}\label{1}
		Every $3$-point convex continuum in the plane is simply connected.
		
	\end{Theorem}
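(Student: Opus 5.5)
The plan is to argue by contradiction. Let $S\subset\mathbb{R}^2$ be a $3$-point convex continuum, i.e.\ compact and connected, and suppose $S$ is not simply connected. In the plane, for a continuum this means that $\mathbb{R}^2\setminus S$ has a bounded component $U$, a ``hole''. Fix a point $u\in U$. Then every closed curve in $S$ that winds around $u$ shows that $S$ surrounds $u$. The goal is to produce three points of $S$, none of whose three connecting segments lies in $S$, because every one of those segments has to cross the hole $U$.

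The construction goes through the lines through $u$. Every line $\ell$ through $u$ meets $S$ on both sides of $u$, since $U$ is bounded and $S$ separates $u$ from infinity. So every closed ray from $u$ meets $S$. For each direction $\theta$, let $s(\theta)$ be the first point of $S$ on the ray from $u$ in direction $\theta$. This point exists because $S$ is closed. The set $K=\{s(\theta)\}$ is the part of $S$ that is ``visible'' from $u$. By construction, the open segment from $u$ to $s(\theta)$ misses $S$.

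Now pick three directions $\theta_1,\theta_2,\theta_3$ with $u$ in the interior of the triangle they determine; concretely, take directions $120^\circ$ apart. Set $p_i=s(\theta_i)$. Then $u$ lies in the interior of $\mathrm{conv}\{p_1,p_2,p_3\}$, and each segment $p_ip_j$ separates $u$ from the third point within the triangle.

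The key step is to show that each segment $p_ip_j$ contains a point not in $S$. Suppose $p_1p_2\subset S$. The triangle $up_1p_2$ then has its two sides $up_1$ and $up_2$ meeting $S$ only at $p_1,p_2$, and its third side contained in $S$. I would then rotate the direction continuously from $\theta_1$ to $\theta_2$. The visible points $s(\theta)$ all lie in the closed triangle $up_1p_2$, and the segment $p_1p_2$ itself lies in $S$. So $S$ near the hole on this angular sector is ``flat''. Doing this for all three pairs would make the boundary of $U$, seen from $u$, a subset of the triangle boundary, which is still consistent. So a single fixed choice of directions is not enough.

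The fix is to exploit the freedom in choosing $\theta_i$. By 3-point convexity, for every triple of directions spread around $u$ (no two within a half-plane containing the third's opposite), at least one chord $s(\theta_i)s(\theta_j)$ lies in $S$, and hence bounds $U$. I would show that this forces $\mathrm{bd}U$ to be contained in a union of segments whose chords cover all directions. Then I would pick three points $p_1,p_2,p_3$ on $\mathrm{bd}U\cap S$ close to the \emph{vertices} of such a polygonal boundary, arranged so that each chord $p_ip_j$ cuts through $U$. Since $U$ is open and the chords pass through interior points of $U$, none lies in $S$. This contradicts $P_3$.

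The main obstacle is the last step: making the vertex selection rigorous when $\mathrm{bd}U$ is not a polygon. The first thing I would try is the following. Take $p_1$ to be a point of $\mathrm{bd}U$ farthest from $u$, and $p_2,p_3$ points of $\mathrm{bd}U$ in the directions at $\pm 120^\circ$ from $p_1$. Then I would use strict convexity arguments, together with the openness of $U$ near $u$, to show that each chord passes through $U$.
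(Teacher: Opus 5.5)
\textbf{There is a genuine gap: the choice of the three points is never justified, and the recipe you fall back on fails.}

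Your reduction to a bounded complementary component $U$ and the first-hit points $s(\theta)$ are fine, and you are right that one fixed triple of directions need not work. But the argument stops exactly where the work lies. The claim that $P_3$ ``forces'' the visible part of $\mathrm{bd}\,U$ into a union of segments is given no proof, and it is essentially as strong as the theorem. The concrete recipe you fall back on does not produce a violating triple, even though it must do so for every continuum with a hole. Take $S=([-11,11]\times[-2,2])\setminus((-10,10)\times(-1,1))$, $U=(-10,10)\times(-1,1)$, $u=(0,0)$.
\begin{itemize}
\item A farthest point of $\mathrm{bd}\,U$ from $u$ is a corner, say $p_1=(10,1)$; the other corners are symmetric.
\item The rays from $u$ at $\pm120^\circ$ from the direction of $p_1$ meet $\mathrm{bd}\,U$ at roughly $p_2=(-0.72,1)$ and $p_3=(-0.45,-1)$.
\item Then $p_1p_2$ lies in the top edge of the hole, hence in $S$, so $\{p_1,p_2,p_3\}$ does not violate $P_3$.
\end{itemize}
Nothing in a construction based on directions from an arbitrary interior point keeps two of the chosen points off a common flat piece of $\mathrm{bd}\,U$. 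There is also no ``strict convexity'' of $\mathrm{bd}\,U$ available to appeal to.

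What is missing is a choice of three points of $S$ whose connecting segments are forced to enter the hole. The paper gets this from a disk $D$ of maximal radius whose interior lies in the hole. For instance, centre $D$ at a maximizer of $\mathrm{dist}(\cdot,S)$ on $\mathrm{cl}\,U$. By maximality, the contact set $\mathrm{bd}\,D\cap S$ cannot lie in an open semicircle, since otherwise you could shift the centre away from it and enlarge the radius. So the contact set has either at least three points, or exactly two antipodal points $x,y$.
\begin{itemize}
\item With at least three contact points, any three of them work: a chord joining two distinct points of the circle meets the open disk, which misses $S$.
\item With two antipodal points $x,y$, take $z\in S$ on the perpendicular bisector line of $xy$. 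Such $z$ exists because that line passes through the centre, which $S$ surrounds. Then $xy$ passes through the centre. The segments $xz$ and $yz$ make acute angles with the diameter at $x$ and $y$, so they enter the open disk immediately.
\end{itemize}
The roundness of $D$ is what does the job you wanted strict convexity to do.
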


    \begin{proof}
    	We assume that $S$ is not simply connected. So, the complement of $S$ has an open, bounded component $C$. Choose a maximal disk $D\subset \mathrm{cl}C$.  If $\mathrm{card}(D\cap S)=2$, then there exist two diametrally opposite points $x,y$ on the boundary of $D$, which belong to $S$. Consider $z\in {L}_{xy}\cap S$. Obviously, $xy,xz,yz\not \subset S$. If $u,v,w\in D\cap \mathrm{bd}S$, then $uv,uw,vw\not \subset S$.
    \end{proof}
	
	Let ${B}_{S}$ be the set of all points of $S\subset \mathbb{R}^2$ at which $S$ is not locally convex.
	
	\begin{lem}\label{3}
		If $S$ is a $3$-point convex continuum in the plane, then ${B}_{S}\subset \mathrm{ker}S$.
	\end{lem}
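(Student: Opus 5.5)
I will argue by contradiction. Suppose $q\in B_S$ but $q\notin\mathrm{ker}S$. Then there is a point $x\in S$ with $qx\not\subset S$. Since $S$ is not locally convex at $q$, every open ball $N$ centred at $q$ has $S\cap N$ non-convex. So for every $\varepsilon>0$ there are points $y,z\in S$ within distance $\varepsilon$ of $q$ with $yz\not\subset S$. The aim is to pick $y,z$ so close to $q$ that neither $xy$ nor $xz$ lies in $S$. The triple $\{x,y,z\}$ then has none of its three segments in $S$, contradicting property $P_3$.

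The key step is to transfer the failure of $qx\subset S$ to the segments $xy$ and $xz$. By Theorem \ref{1}, $S$ is simply connected. I would first use a compactness argument: choose the points $y_k,z_k\to q$ with $y_kz_k\not\subset S$. If infinitely many of the segments $xy_k$ lay in $S$, then, since $S$ is closed (being a continuum), $S$ would contain the limit segment $xq$, which is false. The same holds for $xz_k$. Hence for large $k$ neither $xy_k$ nor $xz_k$ lies in $S$. Together with $y_kz_k\not\subset S$, this is three points of $S$ with no segment in $S$.

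I must also make sure the three points are distinct. We have $y_k\neq z_k$ because $y_kz_k\not\subset S$. We have $x\neq q$ because $qx\not\subset S$. Since $y_k,z_k\to q$, both are eventually different from $x$.

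The main obstacle I expect is the closedness used in the limit argument. The conclusion that $\lim xy_k\subset S$ needs $S$ closed, and this is exactly what "continuum" supplies; in this version Theorem \ref{1} may not even be needed. I would double-check the subtle case in which the non-local-convexity at $q$ is witnessed only by pairs $y,z$ whose segment $yz$ passes near $q$. This does not affect the argument, since $y,z$ themselves converge to $q$. If the intended notion of local convexity uses relative neighbourhoods, I would fall back on simple connectivity, via Theorem \ref{1}, to guarantee that the non-convexity near $q$ is genuine. I would use Tietze's theorem to ensure that $q$ is a genuine boundary reflex point rather than an artifact of the set being disconnected locally.
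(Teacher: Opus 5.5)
Your proof is correct and is essentially the paper's argument in contrapositive form. The paper fixes $y\in S$, takes $x_n,x_n'\to x$ with $x_nx_n'\not\subset S$, uses $P_3$ to get one of $x_ny$, $x_n'y$ inside $S$, and passes to the limit using closedness, exactly as you do. Your closing remarks about Theorem \ref{1}, relative neighbourhoods and Tietze's theorem are unnecessary: under the paper's definition the ball $N$ is convex, so $yz\not\subset S\cap N$ already gives $yz\not\subset S$.
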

	
	\begin{proof}
		For every $x\in {B}_{S}$ and any $n\in N$, there exist points ${x}_{n}, {x}_{n}^{\prime}\in B(x,\frac{1}{n})$ such that ${x}_{n}{x}_{n}^{\prime}\not \subset S$, since $S$ is not locally convex at $x$. By the $3$-point convexity of $S$, ${x}_{n}^{\prime\prime}y$ must lie entirely in $S$ for any $y\in S$, where ${x}_{n}^{\prime\prime}$ is ${x}_{n}$ or ${x}_{n}^{\prime}$. Clearly, ${x}_{n}^{\prime\prime}y\to xy$ if $n\to \infty$. Moreover, $xy\subset S$, since $S$ is a continuum. Thus, $x\in \mathrm{ker}S$.
	\end{proof}
	
	 We say that a set of points $S$ in the Euclidean space $\mathbb{R}^{d}$ is {\it in general position} if any $d+1$ points of $S$ are affinely independent, $S$ is {\it in convex position} if no point of $S$ is a convex combination of the other points.
	
	\begin{lem}\label{4}
		If $S$ is a $3$-point convex continuum in the plane, then ${B}_{S}$ is in general position.
	\end{lem}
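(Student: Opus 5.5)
We argue by contradiction: assume three distinct points $a,b,c\in B_S$ lie on a common line $\ell$, with $b$ between $a$ and $c$. In the plane, general position means exactly that no three points of $B_S$ are collinear, so it is enough to derive a contradiction from this assumption. By Lemma~\ref{3}, $a,b,c\in \mathrm{ker}S$, so $ab$, $bc$ and $ac$ all lie in $S$, and $S$ is starshaped with respect to each of the three points.

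The plan is to exploit a local failure of convexity at the middle point $b$. Since $S$ is not locally convex at $b$, the proof of Lemma~\ref{3} gives, for every $n$, points $x_n,x_n'\in S\cap B(b,\frac1n)$ with $x_nx_n'\not\subset S$. The segment $x_nx_n'$ cannot meet $\ell$ in a way that keeps it inside $S$. Since the kernel $\mathrm{ker}S$ is convex, it contains $ac$, and in particular a neighborhood of $b$ along $\ell$.

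Next we use starshapedness from points of $ac$ near $b$. Take $p\in ac$ close to $b$ on one side and $q$ on the other. Both $px_n$ and $qx_n$ lie in $S$, as do $px_n'$ and $qx_n'$. We then show that if $x_n$ and $x_n'$ lie in the same closed half-plane of $\ell$, the triangles $\mathrm{conv}\{p,q,x_n\}$ and $\mathrm{conv}\{p,q,x_n'\}$ lie in $S$. This holds because every point of such a triangle lies on a segment from some kernel point of $pq$ to $x_n$. Choosing $p,q$ suitably so that the union covers $x_nx_n'$, we get a contradiction. If instead $x_n$ and $x_n'$ lie on opposite sides of $\ell$, then $x_nx_n'$ crosses $\ell$ at a point of $ac$ near $b$. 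Since that point is a kernel point, we get $x_nx_n'\subset S$, again a contradiction. It follows that $S$ is locally convex at $b$, contradicting $b\in B_S$.

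The main obstacle is the same-side case. There we must show that a neighborhood of $b$ in $S$ is covered by the union of segments from the kernel segment $ac$, and that the relevant convex hulls contain all of $x_nx_n'$. I would first try to prove that $S\cap B(b,\varepsilon)$, for small $\varepsilon$, equals $\bigcup_{y}\mathrm{conv}(pq\cup\{y\})$ intersected with the ball. Each half (above and below $\ell$) is then the intersection of the ball with a set starshaped from the whole segment $pq$. I would then argue that such a half is convex near an interior point of $pq$, because both one-sided boundary curves are graphs over $\ell$ visible from all of $pq$. This is where the collinearity of three bad points is really used: it places $b$ in the relative interior of a kernel segment.
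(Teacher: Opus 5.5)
Your setup is sound. Lemma~\ref{3} puts $a,b,c$ in $\mathrm{ker}S$. Convexity of the kernel gives $ac\subset \mathrm{ker}S$, which is a clean way to get $\mathrm{conv}(ac\cup\{y\})\subset S$ without the paper's appeal to Theorem~\ref{1}. Your opposite-side case is also correct once $n$ is large enough that $B(b,\frac1n)\cap\ell\subset ac$.

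The gap is the same-side case. Your claim that, for suitable $p,q\in ac$, the union $\mathrm{conv}\{p,q,x_n\}\cup\mathrm{conv}\{p,q,x_n'\}$ covers $x_nx_n'$ is false. Take $\ell$ horizontal, $x_n=b+(-\varepsilon,\delta)$ and $x_n'=b+(\varepsilon,\delta)$. Each triangle meets the line at height $\delta$ only in its apex, so the midpoint $b+(0,\delta)$ of $x_nx_n'$ is never covered, whatever $p,q\in\ell$ you choose. The fallback in your last paragraph does not repair this. A set starshaped with respect to every point of a segment need not be convex, and saying the boundary curves are graphs visible from $pq$ does not explain why $S$ should be convex near $b$.

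The missing idea is to use a point of $S$ off $\ell$ that does not depend on $n$. Suppose $y\in S$ lies strictly on one side of $\ell$, and let $\Pi_y$ be the closed half-plane bounded by $\ell$ that contains $y$. Then $\mathrm{conv}\{a,c,y\}\subset S$. Since $b$ is in the relative interior of $ac$ and not on the lines $\overline{ay}$, $\overline{cy}$, there is $r_y>0$ with $B(b,r_y)\cap \Pi_y\subset \mathrm{conv}\{a,c,y\}$. So $S$ contains a full half-disc at $b$ on every side of $\ell$ where it has any point at all.

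Consequently, for small $r$, $S\cap B(b,r)$ is one of the following:
\begin{itemize}
\item the whole disc, if $S$ has points strictly on both sides of $\ell$;
\item a closed half-disc, if it has such points on one side only;
\item the diameter $\ell\cap B(b,r)$, if it has none.
\end{itemize}
All three are convex, so $S$ is locally convex at $b$, contradicting $b\in B_S$. The sequences $x_n,x_n'$ are then unnecessary. Within your own scheme, the repair amounts to fixing such a $y$ once, for instance $x_{n_0}$ for one index, and taking $1/n<r_y$.

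This is essentially the paper's argument. The paper fixes $y^+$ and fills the triangle $xy^+z$, but it does so via simple connectivity (Theorem~\ref{1}) rather than kernel convexity.
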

	
	\begin{proof}
		Assume there exist $x,y,z\in {B}_{S}$, such that $y\in xz$. If ${y}^{+}\in S$ is close to $y$ and lies on one side of $\overline{xz}$, then $x{y}^{+}\cup {y}^{+}z\cup xz\subset S$ because $x,z\in \mathrm{ker}S$, by Lemma \ref{3}. By Theorem \ref{1}, $x{y}^{+}z\subset S$.
		
		If there is no point ${y}^{-}\in S$ close to $y$ on the other side of $\overline{xz}$, then $S$ is locally convex at $y$, absurd.
		
		If there is such a point ${y}^{-}$, then $x{y}^{-}z\subset S$ and therefore $y\notin \mathrm{bd}S$, absurd too.
		
		If neither ${y}^{+}$, nor ${y}^{-}$ exists. then $y\notin {B}_{S}$, a contradiction.
	\end{proof}

%
%

	\begin{Theorem}
	If $S$ is a $3$-point convex continuum in the plane, then ${B}_{S}$ is a totally disconnected set in convex position.
		
	\end{Theorem}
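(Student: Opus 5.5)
The plan is to prove the two assertions separately, resting on Lemma \ref{3} ($B_S\subset \mathrm{ker}S$), Lemma \ref{4} (no three points of $B_S$ collinear) and Theorem \ref{1} (simple connectivity).

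\emph{Total disconnectedness.} I would first show that $B_S$ contains no nondegenerate connected subset. Suppose a component $K$ of $B_S$ had at least two points. A connected planar set with more than one point has cardinality of the continuum. By Lemma \ref{4}, no three points of $K$ are collinear. First I would like to show that $K$ cannot be an arc. Since $K\subset B_S\subset \mathrm{bd}S$, the points of $K$ lie on the boundary. Take three points $x,y,z\in K$ close together with $y$ between $x$ and $z$ along $K$. By Lemma \ref{3}, the segments $xy,yz,xz$ lie in $S$, and by Theorem \ref{1} the whole triangle $xyz\subset S$. Taking more points of $K$ between $x$ and $z$ and their pairwise triangles, the region bounded by the chords is filled by $S$. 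The same argument as in Lemma \ref{4} then shows that $S$ is locally convex at an intermediate point $y$, or that $y\in\mathrm{int}S$, contradicting $y\in B_S$.

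I expect this step to be the main obstacle, because a general connected set need not be an arc. To handle it, I would instead use the convex-position statement proved first, independently. A set in convex position in which no three points are collinear lies on the boundary of its convex hull. A connected subset of the boundary of a convex set with more than one point contains a nondegenerate arc of that convex curve. Then the triangle-filling argument above applies to three points of this arc.

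\emph{Convex position.} Suppose some $p\in B_S$ is a convex combination of other points of $B_S$. By Carathéodory in the plane, and since no three points are collinear, $p\in\mathrm{int}\,(xyz)$ for some $x,y,z\in B_S$. The points $x,y,z$ lie in $\mathrm{ker}S$, so the triangle $xyz$ has boundary in $S$, and Theorem \ref{1} gives $xyz\subset S$. Hence $p$ is an interior point of $S$, so $S$ is locally convex at $p$, which contradicts $p\in B_S$. This proves convex position, and feeding it into the connectedness step completes the proof.
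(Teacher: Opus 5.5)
Your proposal is correct, and its core mechanism is the same as the paper's. Lemma \ref{3} gives $B_S\subset\mathrm{ker}S$, hence $\mathrm{conv}B_S\subset S$. A non-degenerate arc $A\subset B_S$, strictly convex by Lemma \ref{4}, then forces either local convexity of $S$ along $A$ or points of $A$ in $\mathrm{int}S$.

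The difference is in the decomposition. The paper stops at $\mathrm{conv}B_S\subset\mathrm{ker}S$, leaves convex position implicit, and only excludes arcs in $B_S$. Excluding arcs does not by itself give total disconnectedness, since non-degenerate planar continua without arcs exist (e.g.\ the pseudo-arc). You prove convex position first, via Carath\'eodory together with Lemma \ref{4} to force $p\in\mathrm{int}\,xyz$. You then use it to place $B_S$ on the convex curve $\mathrm{bd}\,\mathrm{conv}B_S$, where every non-degenerate connected subset contains an arc. That is exactly the step needed to pass from ``no arcs'' to ``totally disconnected'', so your route is the more complete one.

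Two minor points:
\begin{itemize}
\item Theorem \ref{1} is not needed to fill triangles. A kernel is always convex, so $\mathrm{conv}B_S\subset\mathrm{ker}S\subset S$ directly.
\item In the arc step, a point $w\in S\setminus\mathrm{conv}A$ near $y$ need not make $y$ itself interior, for instance if $w$ lies on a supporting line of $\mathrm{conv}A$ at $y$. Conclude instead with the point $p\in A$ nearest to $w$: for $a,b\in A$ close to $p$ on either side, $p\in\mathrm{int}\,\mathrm{conv}\{w,a,b\}\subset\mathrm{int}S$. This gives the same contradiction and is what the paper means by ``many points in $A$ are not boundary points''.
\end{itemize}
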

	
	\begin{proof}
	By Theorem \ref{1}, $S$ is simply connected. By Lemma \ref{3}, ${B}_{S}\subset \mathrm {ker}{S}\subset S$. Thus, $\mathrm{conv}{B}_{S}\subset \mathrm{ker}S$. Suppose there exists a non-degenerate arc $A\subset {B}_{S}$. By Lemma \ref{4}, $A$ is not a line-segment. Take arbitrarily $x\notin \mathrm{conv}A$, close to $A$. Consider ${A}^{\prime}=\mathrm{conv}(\{x\}\cup A)$. If $x\in S$, then ${A}^{\prime}\subset S$, because ${A}^{\prime}\subset \mathrm{ker}S$. Hence, many points in $A$ are not boundary points of $S$. Absurd.	
	\end{proof}

	\begin{lem}\label{2}
		For any line-segment~$L\subset \mathbb{R}^{d}$,~the set~$\mathbb{R}^{d}\backslash L$~can be partitioned in two convex sets.
		
	\end{lem}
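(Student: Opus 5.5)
Write $L=ab$ (if $L$ is a single point the same construction works with $a=b$). The plan is to cut $\mathbb{R}^d\setminus L$ by a hyperplane containing $L$ and assign the pieces of that hyperplane carefully.

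\textbf{Choice of hyperplane.} Choose a hyperplane $H$ containing $L$; this is possible since $\dim \overline{ab}\le 1<d$. Let $H^+$ and $H^-$ be the two open half-spaces bounded by $H$. These are convex, but the points of $H\setminus L$ still have to be distributed between the two pieces.

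\textbf{Splitting $H\setminus L$.} We split $H\setminus L$ into two convex pieces $K_1,K_2\subset H$ and set
$$A=H^+\cup K_1,\qquad B=H^-\cup K_2 .$$
A union of an open half-space with a convex subset of its boundary hyperplane is convex. Indeed, for $x\in H^+$ and $y\in K_1$, every point of $xy$ other than $y$ lies in $H^+$. For two points of $K_1$, the segment between them lies in $K_1$. So the whole problem reduces to partitioning $H\setminus L$, a copy of $\mathbb{R}^{d-1}$ minus a segment, into two convex sets.

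\textbf{Induction on $d$.} We do this by induction on $d$.
\begin{itemize}
\item For $d=2$, $H$ is the line $\overline{ab}$, and $H\setminus L$ consists of two open rays, each convex. Put one ray into $A$ and the other into $B$.
\item For $d\ge 3$, the inductive hypothesis applied inside $H\cong\mathbb{R}^{d-1}$ partitions $H\setminus L$ into two convex sets $K_1,K_2$, and the construction above finishes the step.
\end{itemize}
For $d=1$ the statement is also true (two rays), so the induction could even start there.

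\textbf{Main obstacle.} The only delicate point is the convexity of $H^+\cup K_1$ when one point is in $H^+$ and the other in $K_1$. This is handled by the observation above: the open segment stays in $H^+$. The sets $A$ and $B$ are disjoint and together cover $\mathbb{R}^d\setminus L$, which completes the proof.
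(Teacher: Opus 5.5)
Your proof is correct and follows the same route as the paper: induction on dimension, a hyperplane $H\supset L$, and the pieces $H^+\cup K_1$ and $H^-\cup K_2$, where $K_1,K_2$ is the inductive partition of $H\setminus L$. You also check explicitly that $H^+\cup K_1$ is convex, which the paper leaves implicit, and your $d=2$ base case is just the paper's $d=1$ case applied inside the line $\overline{ab}$.
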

	\begin{proof}
		Let~$d=1$.~Obviously,~$\mathbb{R}^{1}\backslash L=A\cup B$,~where~$A,B$~are two open half-lines.
		
		
		Suppose the theorem holds when~$d=n-1$.~Now,~we assume~$d=n$.~Let~$H\supset L$~be a hyperplane of~$\mathbb{R}^{d}$,~and~$H^+,H^-$~the open half-spaces determined by~$H$.~According to our assumption,~$H\backslash L=S_1\cup S_2$,~where~$S_1,S_2$~are convex.~Then~$\mathbb{R}^{n}\backslash L$~is partitioned in two convex sets,~$S_1\cup H^+$~and~$S_2\cup H^-$.
	\end{proof}

	\begin{Theorem}\label{5}
		Assume~$A\cup B=D\subset\mathbb{R}^{d}$.~If~$A$~is~$m$-point convex and~$B$~is~$n$-point convex,~then~$D$~is~$(m+n-1)$-point convex.
	\end{Theorem}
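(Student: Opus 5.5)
Our plan is a direct pigeonhole argument. Take any $m+n-1$ distinct points $x_1,\dots,x_{m+n-1}$ of $D=A\cup B$. Assign each point to $A$ if it lies in $A$, and otherwise to $B$. This gives a partition into $P_A\subset A$ and $P_B\subset B$ with $|P_A|+|P_B|=m+n-1$. Points lying in $A\cap B$ are placed in $A$ only, so that the two classes are disjoint and the counts add up correctly.

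By pigeonhole, either $|P_A|\geq m$ or $|P_B|\geq n$; if both failed, we would have $|P_A|+|P_B|\leq (m-1)+(n-1)=m+n-2$, a contradiction. In the first case we choose $m$ distinct points of $P_A$. Property $P_m$ of $A$ then gives two of them whose line-segment lies in $A\subset D$. In the second case we choose $n$ points of $P_B$ and apply property $P_n$ of $B$, which gives a line-segment contained in $B\subset D$. In either case, one of the line-segments determined by the original $m+n-1$ points lies in $D$, so $D$ is $(m+n-1)$-point convex.

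There is no real obstacle here. The only points requiring care are making the partition disjoint when $A\cap B\neq\emptyset$, and noting that if $A$ (or $B$) has fewer than $m$ (resp.\ $n$) points, the corresponding case simply cannot occur, since it would require that many distinct points in the set. It may be worth remarking afterwards that, combined with Lemma \ref{2}, this shows that the complement of a line-segment, being a union of two convex ($2$-point convex) sets, is $3$-point convex. More generally, a union of $k$ convex sets is $(k+1)$-point convex, by induction on $k$ using the statement just proved.
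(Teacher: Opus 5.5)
Your proof is correct and is essentially the paper's argument: a pigeonhole count showing that at least $m$ of the points lie in $A$ or at least $n$ lie in $B$, followed by an application of $P_m$ or $P_n$. Your explicit disjoint assignment of points in $A\cap B$ just spells out what the paper leaves implicit when it says that if $A$ contains at most $m-1$ of the points, then $B$ contains at least $n$.
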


	\begin{proof}
		Let~$x_1,x_2,...,x_{m+n-1}$~be~$m+n-1$~points in~$D$.
		
		Assume at least~$m$~of these points are in~$A$.~Since~$A$~is~$m$-point convex,~there is a line-segment~$x_ix_j\subset A \subset D$.~In case~$A$~contains at most~$m-1$~points,~$B$~contains at least~$n$~points.~Since~$B$~is~$n$-point convex,~we have a line-segment~$x_ix_j\subset B \subset D$.
		
		Hence~$D$~is~$(m+n-1)$-point convex.
	\end{proof}
	
	\begin{Theorem}
		Let~$S\subset\mathbb{R}^{d}$~be an~$m$-point convex set,~and~$p,q$~any two points in~$S$.~Then
		~$S\backslash pq$~is~$(2m-1)$-point convex.
	\end{Theorem}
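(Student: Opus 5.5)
The plan is to combine Lemma \ref{2} with Theorem \ref{5}. By Lemma \ref{2}, applied to the line-segment $L=pq$, we can write $\mathbb{R}^{d}\backslash pq=C_1\cup C_2$ with $C_1,C_2$ convex. Intersecting with $S$ gives
$$S\backslash pq=(S\cap C_1)\cup(S\cap C_2).$$
If $p=q$, the segment degenerates to a point. The inductive proof of Lemma \ref{2} still goes through in that case: in dimension $1$, the complement of a point is also the union of two open half-lines. So this case needs no separate treatment.

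The key step is to show that each piece $A_i=S\cap C_i$ is $m$-point convex. Take any $m$ distinct points $x_1,\dots,x_m$ of $A_i$. They are $m$ distinct points of $S$, so since $S$ is $m$-point convex, some segment $x_jx_k$ lies in $S$. Its endpoints lie in the convex set $C_i$, so $x_jx_k\subset C_i$. Hence $x_jx_k\subset S\cap C_i=A_i$.

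With both pieces $m$-point convex, Theorem \ref{5} applied to $A=A_1$ and $B=A_2$ (with $n=m$) shows that their union $S\backslash pq$ is $(m+m-1)=(2m-1)$-point convex. Theorem \ref{5} does not require disjointness or nonemptiness of the pieces, so no further case analysis is needed.

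I do not expect a real obstacle. The one point to check is that Lemma \ref{2} genuinely yields convex pieces covering all of $\mathbb{R}^d\backslash pq$, including the degenerate case noted above. Once that holds, the argument reduces to the observation that intersecting an $m$-point convex set with a convex set preserves $m$-point convexity.
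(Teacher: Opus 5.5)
Your proposal is correct and follows the paper's proof exactly. Lemma \ref{2} splits $\mathbb{R}^d\setminus pq$ into two convex pieces, intersecting each with $S$ gives two $m$-point convex sets, and Theorem \ref{5} then yields $(2m-1)$-point convexity. Your explicit check that intersecting an $m$-point convex set with a convex set preserves $m$-point convexity fills in the one step the paper only asserts.
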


	\begin{proof}
		By Lemma \ref{2},~$\mathbb{R}^{d}\backslash pq=A'\cup B'$,~where~$A',B'$~are two convex sets.\\~Then the sets~$A=S\cap A',B=S\cap B'$~are~$m$-point convex.~So we have~$S\backslash pq=A\cup B$.~From Theorem \ref{5},~it follows that~$A\cup B$~is~$(2m-1)$-point convex.
	\end{proof}

	\begin{Theorem}
		
		If~$S\subset\mathbb{R}^{d}$~is a convex set,~$d\geq 2$,~and~$A\subset S$~is a polytope with~$m$~facets,~then~$S\backslash A$~is~$(m+1)$-point convex.
	\end{Theorem}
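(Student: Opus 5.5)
Write $A=\bigcap_{i=1}^{m}F_i^{\le}$, where $F_i^{\le}$ is the closed half-space bounded by the hyperplane $H_i$ spanned by the $i$-th facet and containing $A$. I will treat $A$ as a closed polytope. If $A$ is not full-dimensional, its facets are taken relative to $\mathrm{aff}A$, and each relative half-space is extended to a half-space of $\mathbb{R}^d$ together with the two open sides of a hyperplane containing $\mathrm{aff}A$. This is where the hypothesis $d\ge 2$ and the idea of Lemma~\ref{2} enter. Then
\[
\mathbb{R}^d\setminus A=\bigcup_{i=1}^{m}\bigl(\mathbb{R}^d\setminus F_i^{\le}\bigr),
\]
a union of $m$ open half-spaces $G_1,\dots,G_m$. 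Each $G_i$ is convex, so each $S\cap G_i$ is convex, i.e.\ $2$-point convex. The plan is to apply Theorem~\ref{5} $m-1$ times, but that only gives $(m+1)$-point convexity if we use the right pieces. Indeed, applying Theorem~\ref{5} naively to $m$ convex pieces gives $(m\cdot 2-(m-1))=(m+1)$-point convexity, since each union step adds $2-1=1$. So the naive count already works: by induction, a union of $k$ convex sets is $(k+1)$-point convex.

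So the key steps are as follows. (1) Show $S\setminus A=\bigcup_{i=1}^m (S\cap G_i)$. This holds because a point outside the polytope $A=\bigcap F_i^{\le}$ violates at least one defining inequality. (2) Note that each $S\cap G_i$ is an intersection of convex sets and hence convex. (3) Prove by induction on $k$, using Theorem~\ref{5} with $n=2$ at each step, that a union of $k$ convex sets is $(k+1)$-point convex. Alternatively, argue directly by pigeonhole: among $m+1$ points, two lie in the same $S\cap G_i$, so the segment between them lies in $S\cap G_i\subset S\setminus A$.

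The only delicate point is step (1) when $A$ is lower-dimensional or when "facets" are understood relative to $\mathrm{aff}A$. In that case $A$ is not the intersection of $m$ half-spaces of $\mathbb{R}^d$, and the complement needs extra pieces. I would handle it by fixing a hyperplane $H\supset\mathrm{aff}A$ and splitting the two open sides $H^+,H^-$ into the existing pieces, as in the proof of Lemma~\ref{2}. Concretely, within $H$ the relative complement $H\setminus A$ is covered by $m$ relatively open half-spaces $G_i'$ of $H$. Assign $H^+$ to $G_1'$ and $H^-$ to $G_2'$, which is possible since $m\ge 2$ whenever $\dim A\ge 1$; the pieces $G_1'\cup H^+$ and $G_2'\cup H^-$ are convex exactly as in Lemma~\ref{2}. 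If $\dim A<d-1$, choose a nested family of affine subspaces and absorb the open sides in the same manner. The counting then proceeds unchanged, and this bookkeeping is the step I expect to require the most care.
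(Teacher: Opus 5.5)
Your argument is correct and is essentially the paper's: cover $S\setminus A$ by the $m$ convex sets $S\cap H_i^+$, where $H_i^+$ is the open side, away from $A$, of the hyperplane through the $i$-th facet, and conclude by pigeonhole (your inductive use of Theorem~\ref{5} with $n=2$ is an equivalent repackaging). Your extra treatment of lower-dimensional $A$ is also sound, and it covers a case the paper's proof does not address, since the paper tacitly takes $A$ full-dimensional so that each facet spans a unique hyperplane: you absorb the open sides of a flag of affine subspaces into two of the pieces, exactly as in Lemma~\ref{2}, using $m\ge 2$ when $\dim A\ge 1$.
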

	
	\begin{proof}
		Let~$A_1,A_2,...,A_m$~be the~$m$~facets of~$A$.
		
		Consider the supporting hyperplane~$H_i\supset A_i$.~Suppose~$H_i^+$~is the open half-space determined by~$H_i$~and disjoint from~$A$.~Put~$S_i=S\cap H_i^+$.

		Now,~$S\backslash A=\mathop{\cup}\limits _{i=1}^{m}S_i$.~Among~$m+1$~points from~$S\backslash A$,~there must exist two points in the same~$S_i$.~The line-segment joining them lies in~$S_i\subset S\backslash A$.
		
		Hence,~$S\backslash A$~is~$(m+1)$-point convex.
	\end{proof}

	\begin{Theorem}
		The closure of an~$m$-point convex set in~$\mathbb{R}^{d}$~is~$m$-point convex.
	\end{Theorem}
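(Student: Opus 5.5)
Take $m$ distinct points $y_1,\dots,y_m$ in $\mathrm{cl}S$. The aim is to show that one of the segments $y_iy_j$ lies in $\mathrm{cl}S$. The plan is to approximate the $y_i$ by points of $S$, use the $m$-point convexity of $S$ at each stage of the approximation, and then pass to the limit with a pigeonhole argument on the finitely many pairs of indices.

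\textbf{Approximation.} Put $\delta=\min_{i\neq j}\|y_i-y_j\|>0$. For each $k\in\mathbb{N}$ and each $i$, choose $x_i^k\in S$ with $\|x_i^k-y_i\|<\min\{1/k,\delta/3\}$. Because the balls $B(y_i,\delta/3)$ are pairwise disjoint, the points $x_1^k,\dots,x_m^k$ are $m$ distinct points of $S$. This distinctness is the only place where some care is needed, and the choice of radius $\delta/3$ is what secures it, so that property $P_m$ really applies.

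\textbf{Pigeonhole.} By property $P_m$, for every $k$ there is a pair $\{i_k,j_k\}$, $i_k\neq j_k$, with $x_{i_k}^k x_{j_k}^k\subset S$. There are only $\binom{m}{2}$ possible pairs, so some fixed pair $\{i,j\}$ occurs for infinitely many $k$. Pass to that subsequence, so that $x_i^k x_j^k\subset S$ for all $k$ in it.

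\textbf{Limit.} Let $z=(1-t)y_i+ty_j$ with $t\in[0,1]$ be any point of $y_iy_j$. Set $z_k=(1-t)x_i^k+tx_j^k$. Then $z_k\in x_i^kx_j^k\subset S$, and
\[
\|z_k-z\|\le (1-t)\|x_i^k-y_i\|+t\|x_j^k-y_j\|<1/k\to 0 .
\]
Hence $z\in\mathrm{cl}S$, so $y_iy_j\subset \mathrm{cl}S$. Therefore $\mathrm{cl}S$ is $m$-point convex.
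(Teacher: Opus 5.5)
Your proof is correct and follows the same route as the paper's: approximate the given points by points of $S$, apply pigeonhole to the finitely many pairs to get one pair whose approximating segments lie in $S$ infinitely often, and pass to the limit. You also handle two details the paper leaves implicit. You treat general $m$ rather than only $m=3$, and you ensure the approximating points are distinct via the $\delta/3$ radius, so that $P_m$ actually applies.
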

	
	\begin{proof}
		We prove the theorem for~$m=3$.~Adapting the proof to  arbitrary~$m$~is straightforward.
		
		Let~$S\subset \mathbb{R}^{d}$~be a~$3$-point convex set.~Choose~$x_0,y_0,z_0\in S$,~$x_1,y_1,z_1\in S$,...,~$x_n,y_n,z_n\in S$;~we have at least one line-segment contained in~$S$~for every triple~$x_i,y_i,z_i$.
		
		Take~$x_n\rightarrow x,y_n\rightarrow y,z_n\rightarrow z$.~For every~$n$,~$x_ny_n\subset S$,~or~$x_nz_n\subset S$,~or~$y_nz_n\\\subset S$.~Then~$x_ny_n\subset S$~for infinitely many indices~$n$,~or~$y_nz_n\subset S$~for infinitely many indices~$n$,~or~$x_nz_n\subset S$~for infinitely many indices~$n$.~Without loss of generality,~suppose that~$x_ny_n\subset S$~for infinitely many indices~$n$.~Since~$x_n\rightarrow x$~and~$y_n\rightarrow y$,~we have~$x_ny_n\rightarrow xy$.~Finally,~$x_ny_n\subset S$~implies~$xy\subset \mathrm{cl}S$.
		
		So,~$\mathrm{cl}S$~is a~$3$-point convex set.
	\end{proof}

	Notice that	any~$n$-starshaped set is~$m$-starshaped,~for every integer~$m>n$.

	\begin{Theorem}
		If~$S\subset \mathbb{R}^{2}$~is an open connected~$3$-starshaped set,~then~$S$~is a simply connected set or a simply connected set minus a single point set.
		
	\end{Theorem}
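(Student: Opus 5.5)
We argue by contradiction on the structure of the bounded complementary components. Let $S$ be open, connected and $3$-starshaped relative to some $x\in S$, so that for any $y,z\in S$ at least one of $xy,xz$ lies in $S$. Because $S$ is an open connected planar set, it is simply connected exactly when $\mathbb{R}^2\setminus S$ has no bounded component. (Equivalently, every bounded component of the complement of $S$ in the Riemann sphere meets the point at infinity.) So it suffices to show two things: there is at most one bounded component $K$ of $\mathbb{R}^2\setminus S$, and if there is one, then $K$ is a single point $\{p\}$ and $S\cup\{p\}$ is simply connected.

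The key observation concerns rays from $x$. Call a point $y\in S$ \emph{visible} if $xy\subset S$. By $3$-starshapedness, the invisible points of $S$ are pairwise "incompatible": at most one of any two points can fail to be visible. Hence, if $y$ is invisible, then \emph{every} other point $z\in S$ is visible. So the set $I$ of invisible points has at most one element. Let $K$ be a bounded component of $\mathbb{R}^2\setminus S$. Since $K$ is compact and surrounded by $S$, we may pick a point $k\in K$ and a ray from $x$ through $k$. Every point of $S$ on this ray beyond $K$ sees $x$ only through $K$, so it is invisible. Here we use that $K$ is bounded, so the ray re-enters $S$ after leaving $K$ (rays starting in the hole meet the unbounded part). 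Openness of $S$ gives a whole open set of invisible points as soon as $K$ has nonempty angular size as seen from $x$.

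The main obstacle is making this precise. We must show that unless $K$ is a single point, the rays from $x$ meeting $K$ sweep a nondegenerate angle, or else form a single ray containing a nondegenerate segment. In either case we obtain at least two (indeed infinitely many) invisible points of $S$, contradicting $|I|\le 1$. I would handle this by taking two distinct points $k_1,k_2\in K$.

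If $x,k_1,k_2$ are not collinear, points of $S$ just beyond $k_1$ and beyond $k_2$ on their rays are two distinct invisible points.

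If they are collinear, the points of $S$ on the ray after the farthest point of $K$ give infinitely many invisible points.

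One detail needs care: showing that beyond $K$ the ray actually contains points of $S$. This holds because the ray is unbounded and $K$ is a bounded component, so the ray exits $K$ into $S$. Otherwise, the part of the ray immediately past the farthest point of $K$ would lie in another complementary component, which can be handled by choosing the last point where the ray leaves the union of all bounded components. Hence every bounded complementary component is a single point, and there is at most one of them, since two such points already give two invisible points by the same ray argument.

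It remains to note that if $K=\{p\}$, then $S\cup\{p\}$ is open, connected, and has no bounded complementary component, so it is simply connected. This yields the dichotomy in the statement.
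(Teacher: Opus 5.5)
Your proposal is correct and takes essentially the paper's approach: fix the kernel point $x$, send rays from $x$ through points of a bounded complementary component (or through two one-point components), and obtain two distinct points of $S$ whose segments to $x$ leave $S$. You also supply two details the paper omits: the ray beyond $k$ must meet $S$, most cleanly because the closed ray from $k$ is connected and unbounded and so cannot lie in the complement without enlarging $K$; and the case where the two points lie on a single ray from $x$.

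Your same-ray argument (openness gives an open interval of points of $S$ beyond a single blocking point) already rules out a one-point component $\{p\}$. So open connected $3$-starshaped planar sets are in fact always simply connected, and your unproved claim that $S\cup\{p\}$ is open is never needed.
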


	\begin{proof}
		Let~$S$~be an open connected~$3$-starshaped set.
		
		Assume~$S$~is not simply connected. So, the complement of $S$
		has a closed, bounded component $C_1$. If $\mathrm{card}(C_1)\geq 2$,~then there are at least two points~$s,t\in C_1$. For any point $x\in S$, there are two points $y,z\in S$ such that~$s\in xy$ and $t\in xz$. Hence, $xy,xz\not\subset S$. Therefore, there is no point in the $3$-kernel of~$S$, absurd. If $\mathrm{card}(C_1)=1$ and there exists another closed, bounded component $C_2$ of the complement of $S$, then let $C_1=\{s\}$ and $C_2=\{t\}$. The preceding argument applies again.
		
		Hence,~$S$~is a simply connected set or a simply connected set minus the single point set $C_1$.
	\end{proof}

	\begin{Theorem}
		If~$S\subset \mathbb{R}^{d}$~is~$3$-starshaped,~then~$S$~has at most two components. If ~$S$~has two components,~one of the components is a single point set and the other a star-shaped set.
	\end{Theorem}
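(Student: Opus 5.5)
Fix a point $x\in\mathrm{ker}_3S$; such a point exists because $S$ is $3$-starshaped. The plan is to describe all of $S$ relative to $x$. Let $T=\{y\in S: xy\subset S\}$ be the set of points that $x$ sees. Then $T$ is starshaped with respect to $x$, and hence connected, since it is a union of segments through $x$. The whole argument rests on the defining property of the $3$-kernel: for any two points $y,z\in S$, at least one of $xy$, $xz$ lies in $S$. So at most one point of $S$ is not seen from $x$. Indeed, if $y\neq z$ both lay in $S\setminus T$, the pair $y,z$ would violate the property.

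Consequently $S=T$ or $S=T\cup\{p\}$ for a single point $p\notin T$. In the first case $S$ is starshaped, so it is connected and has one component.

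In the second case, suppose first that $p$ lies in the closure of $T$. Then $T\cup\{p\}$ is connected, since adding a limit point to a connected set keeps it connected, and $S$ again has one component. Now suppose $p$ does not lie in the closure of $T$. Then $\{p\}$ is open in $S$, because a small neighbourhood of $p$ meets $S$ only in $p$. It is also closed in $S$. So $\{p\}$ is a component of $S$, and the remaining component is $T$, which is connected and starshaped with respect to $x$.

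In every case $S$ has at most two components. When it has two, they are the single point set $\{p\}$ and the starshaped set $T$, as the statement claims.

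The only step needing care is showing that $T$ is exactly the union of the components other than $\{p\}$. This requires that $T$ itself is connected, which holds by starshapedness, and that $\{p\}$, when it is not in the closure of $T$, is separated from $T$, which is immediate. So I expect no real obstacle beyond keeping the topological case split clean.
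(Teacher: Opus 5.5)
Your proof is correct, but it is organized differently from the paper's.

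The paper starts from the components of $S$. A point $k\in\mathrm{ker}_3 S$ cannot see any point outside its own component, since such a segment would join two components. So three components, or two components each with at least two points, would produce two points invisible from $k$. Once one component is a singleton $\{m\}$, a kernel point $k$ in the other component $S_2$ that failed to see some $x\in S_2$ would see neither $m$ nor $x$. Hence $S_2$ is starshaped.

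You instead fix one kernel point $x$ and prove two things: its visibility set $T$ is starshaped, and $T$ misses at most one point $p$ of $S$. You then read off the component structure from whether $p\in\mathrm{cl}\,T$ (one component) or not (the two components $\{p\}$ and $T$).

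Your route gives a slightly stronger structural fact: every $3$-starshaped set is a starshaped set plus at most one extra point, whether or not it is connected. It also makes explicit the topological step the paper leaves implicit, namely that the single unseen point forms its own component only when it lies outside $\mathrm{cl}\,T$.

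It handles the degenerate case $S=\{a,b\}$ automatically. There the paper's remark that $\mathrm{ker}_3 S\subset S_2$ is actually false, though harmless.

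The paper's route, in turn, shows that the large component is starshaped with respect to every point of $\mathrm{ker}_3 S$, not just a chosen one. You can recover this from your argument. When $S$ has two components and one of them has more than one point, your case analysis forces $T$ to equal that component for every choice of $x\in\mathrm{ker}_3 S$.
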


	\begin{proof}

		Clearly,~$S$~can not have at least~$3$~components.~So,~$S$~has at most two components,~$S_1,S_2$.

		Assume that neither~$S_1$~nor~$S_2$~is a single point set.~For any points~$x_1\in S_1$,~$y_1,z_1\in S_2$,~we have~$x_1y_1,x_1z_1\not\subset S$.~Similarly,~for any points~$x_2\in S_2$,~$y_2,z_2\in S_1$,~$x_2y_2,x_2z_2\not\subset S$.~So~$S$~has an empty kernel.
		
		Assume~$S_1=\{m\}$,~$S_2$~are the components of~$S$.~Of course,~the~$3$-kernel~$K$~\\of~$S$~lies in~$S_2$.~If~$kx\not\subset S_2$~for some~$k\in K$~and~$x\in S_2$,~the points~$m$~and~$x$~can not be seen from~$k$,~which contradicts the~$3$-starshapedness of~$S$.
	\end{proof}

\section{Double right-$3$-point property}

	\begin{Theorem}\label{9}
	Any continuum with non-empty interior and with the double right-$3$-point property is convex.	
\end{Theorem}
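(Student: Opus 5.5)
I would argue by contradiction. Suppose $S$ is a continuum with $\mathrm{int}S\neq\emptyset$ and the double right-$3$-point property, and suppose there are $x,y\in S$ with $xy\not\subset S$.

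The whole argument rests on one basic observation. If $z\in S$ is such that $\{x,y,z\}$ is a right triple, then at least two of $xy,xz,yz$ lie in $S$. Since $xy\not\subset S$, both $xz\subset S$ and $zy\subset S$. The points $z$ making $\{x,y,z\}$ a right triple form exactly the set $W_{xy}$. So every $z\in S\cap W_{xy}$ sees both $x$ and $y$ via segments in $S$. In particular, $x$ and $y$ are joined by the broken line $[x,z,y]\subset S$ for every such $z$.

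The first main step is to show that $S\cap W_{xy}\neq\emptyset$, and in fact that it is large.

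\begin{itemize}
\item $S$ is connected, contains $x$ and $y$, and is not the segment $xy$ (otherwise $xy\subset S$).
\item The sets $H_{xy}$ and $H_{yx}$ bound the slab between $x$ and $y$.
\item If $S$ reaches beyond that slab, connectedness forces $S$ to meet $H_{xy}$ or $H_{yx}$ at a point other than $x$ or $y$. Otherwise $S$ is confined to the closed slab and must meet $C_{xy}$ or avoid it in a controlled way.
\end{itemize}

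I expect to have to perturb $x$ and $y$ here, and this is where the interior is used. Take an interior ball $B\subset S$. Among the right triples with one vertex in $B$, many segments lie inside $B$ automatically. The double property then forces many further segments from points of $B$ to lie in $S$, which should make $S$ star-shaped from points of $B$.

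Concretely, I would first prove a local lemma: \emph{for every $p\in\mathrm{int}S$ and every $q\in S$, we have $pq\subset S$}. To prove it, pick $r$ near $p$, inside the ball, with $\angle prq=90^\circ$ or $\angle rpq=90^\circ$. Then $pr\subset S$, and the property gives $pq\subset S$ or $rq\subset S$. Varying $r$ over a small sphere and using closedness of $S$ (a limit argument like the one in the closure theorem) should give $pq\subset S$ for all $p$ near the centre, and hence $\mathrm{int}S\subset\mathrm{ker}S$.

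Once the kernel contains an open set, I would reduce to a planar question. Let $K=\mathrm{int}S\subset\mathrm{ker}S$. Then both $\mathrm{conv}(\{x\}\cup K)$ and $\mathrm{conv}(\{y\}\cup K)$ lie in $S$. Pick $z\in K$ such that the angle at $z$ in the triangle $xzy$ is obtuse or right, which is possible after moving toward $xy$ if needed. Then slide $z$ along a path in $S$ toward the segment $xy$ until $\{x,y,z\}$ becomes a right triple with $z\in C_{xy}$. The two sides $xz$ and $zy$ are already in $S$. Applying the property to right triples $\{x,w,y'\}$, where $y'$ ranges over points of $zy$ and the right angle is at $w\in xy$, forces segments of the form $wy'$ to fill the triangle $xzy$, and so $xy\subset S$, a contradiction.

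The main obstacle is this last filling step. It requires showing that points of $(xy)$ belong to $S$ at all, and when $w\notin S$ the right triples through $w$ give no information. I would handle it by arguing directly about triangles: take $u\in(xy)\setminus S$ and the maximal disc in the complement of $S$ around $u$, as in the proof of Theorem \ref{1}. Pick two diametral boundary points of that disc in $S$, together with a third point on its circle that lies in $S$. These three points form a right triple none of whose sides lies in $S$, and this contradicts the property.
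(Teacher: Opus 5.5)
Your local lemma is correct, and it is the key step of the paper's proof. For $p\in\mathrm{int}S$ and $q\in S\setminus\{p\}$, take $r\in H_{pq}$ close to $p$. Since $pr\subset S$, the double right-$3$-point property gives $pq\subset S$ or $rq\subset S$. Letting $r\to p$ and using that $S$ is closed yields $pq\subset S$, so $\mathrm{int}S\subset\mathrm{ker}\,S$.

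The gap is in how you then try to conclude. The sliding step is already off. The points $z$ with $\angle xzy$ obtuse form the open ball bounded by $C_{xy}$, which contains $(xy)$. So sliding $z$ toward $xy$ keeps it inside that ball and never brings it onto $C_{xy}$.

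More seriously, your rescue of the filling step via a maximal disc does not produce a right triple:
\begin{itemize}
\item A disc centred at $u$ and maximal in the complement of $S$ may touch $S$ at a single point.
\item A maximal disc in a bounded complementary component, as in Theorem~\ref{1}, touches $S$ in one of two ways. Either it touches in exactly two diametral points, and then no third point of its circle lies in $S$. Or it touches in at least three points among which there need be no diametral pair (think of the incircle of an equilateral triangle).
\item The component of the complement containing $u\in(xy)$ need not be bounded at all.
\end{itemize}
So the configuration you need, two diametral points plus a third point of the circle all in $S$, is not available. Theorem~\ref{1} works because $P_3$ concerns arbitrary triples, not right triples.

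The missing idea is short and uses only your lemma: show $S\subset\mathrm{cl}\,\mathrm{int}S$. Take $a\in S$ and a ball $B(b,\delta)\subset S$. Every point of $\mathrm{int}\,B(b,\delta)$ is an interior point of $S$, so the lemma gives $\mathrm{conv}(\{a\}\cup\mathrm{int}\,B(b,\delta))\subset S$. You already have this when you note $\mathrm{conv}(\{x\}\cup K)\subset S$, but you did not draw the consequence. For $0<t\le 1$, the point $(1-t)a+tb$ is the centre of the open ball $\mathrm{int}\,B((1-t)a+tb,t\delta)$ contained in this cone. Hence these points are interior points of $S$, and they tend to $a$ as $t\to 0$.

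Now take $x,y\in S$ and choose $x_n\in\mathrm{int}S$ with $x_n\to x$. The lemma gives $x_ny\subset S$, and closedness of $S$ gives $xy\subset S$. This is exactly the paper's argument. Your first step about $S\cap W_{xy}\neq\emptyset$, the planar reduction and the filling of the triangle $xzy$ are all unnecessary.
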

	
\begin{proof}
	Let $S$ be a continuum with non-empty interior and with the double right-$3$-point property. Firstly, we show that $S=\mathrm{cl}~\mathrm{int}S$. Take $a\in S$ and $b\in \mathrm{int}S$. There exists  $\delta > 0$ such that $B(b,\delta)\subset S$. Choose two points $c,{c}^{\prime}\in B(b,\delta)$ such that $\{a,c,{c}^{\prime}\}$ be a right triple. Since $S$ has the double right-$3$-point property, at least one of the line-segments $ac$, $a{c}^{\prime}$ lies in $S$. As $\delta$ could be taken as small as we wish, and $S$ is compact, this implies that $ab\subset S$. Consequently, $\mathrm{conv}(\{a\}\cup\mathrm{int}~B(b,\delta))\subset S$, whence $a\in \mathrm{cl}~\mathrm{int}S$.
	
	Now, we show that $S$ is convex. Let $x,y\in S$. Since $x\in \mathrm{cl}~\mathrm{int}S$, we find $x'$ close to $x$ and $\epsilon >0$, such that $B(x', \epsilon)\subset S$. As above, $\mathrm{conv } (\{y\}\cup \mathrm{int}  B(x', \epsilon))\subset S$. As $\|x-x'\|$ can be chosen arbitrarily small and $S$ is compact, this yields $xy\subset S$.
\end{proof}	
\begin{Theorem}\label{8}
	If $S\subset \mathbb{R}^{d}$ has non-empty interior and enjoys the double right-$3$-point property, then $S$ is connected and satisfies $S\subset \mathrm{cl}~\mathrm{int}S$.
\end{Theorem}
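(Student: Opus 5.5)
The plan is to reuse the first step of the proof of Theorem \ref{9}, but without compactness, since $S$ is now not assumed closed. Fix $b\in\mathrm{int}S$ and $\delta>0$ with $B(b,\delta)\subset S$, and let $a\in S\setminus B(b,\delta)$ be arbitrary. The aim is to show that $a$ sees a small open ball inside $\mathrm{int}S$ through a segment contained in $S$. Then both connectedness and $S\subset \mathrm{cl}\,\mathrm{int}S$ follow at once.

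First I produce a whole cone of good segments from $a$. For $c\in \mathrm{int}B(b,\delta)$, consider points $c'\in B(b,\delta)$ with $\{a,c,c'\}$ a right triple having its right angle at $c$; that is, $c'$ lies on the hyperplane through $c$ orthogonal to $\overline{ac}$. Since $d\ge 2$, such $c'\neq c$ exist inside the ball. The segment $cc'$ lies in the convex ball, hence in $S$. The double right-$3$-point property then demands one more segment, $ac$ or $ac'$, in $S$. To get $ac\subset S$ itself I will vary the triple. Suppose $ac\not\subset S$. Then $ac'\subset S$ for every admissible $c'$, that is, for every point of the $(d-1)$-dimensional disc $D_c$ formed by the hyperplane at $c$ intersected with the ball. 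Consequently $\mathrm{conv}(\{a\}\cup D_c)\subset S$, and this cone contains $ac$, because $c\in D_c$ and the cone is convex. That is a contradiction. Hence $ac\subset S$ for every $c\in\mathrm{int}B(b,\delta)$, so
\[
\mathrm{conv}(\{a\}\cup \mathrm{int}B(b,\delta))\subset S .
\]
This neat trick replaces the limiting argument and is the step I expect to need care. One must check that a nondegenerate hyperplane slice exists; this holds because $c$ is interior, so the slice is a genuine $(d-1)$-disc. One must also check that convexity of the union of segments $ac'$ over $c'\in D_c$ really yields the full cone, which it does because that union is exactly the cone.

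Now conclude. The set $\mathrm{conv}(\{a\}\cup \mathrm{int}B(b,\delta))$ minus the point $a$ is open, so it lies in $\mathrm{int}S$. Therefore $a$ is a limit of interior points of $S$ (points of the cone approaching $a$), which gives $S\subset \mathrm{cl}\,\mathrm{int}S$. For connectedness, every $a\in S$ is joined within $S$ by the segment $ab$ to the fixed point $b$. Thus $S$ is polygonally connected, and in fact starshaped with respect to $b$, hence connected. Points $a$ inside $B(b,\delta)$ are trivial, since the ball is convex and contained in $S$.
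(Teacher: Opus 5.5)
\textbf{There is a genuine gap, in the step you call a ``neat trick''.} A triple $\{a,c,c'\}$ is a right triple only when $c'\neq c$. So when $ac\not\subset S$, the property gives you $ac'\subset S$ only for $c'\in D_c\setminus\{c\}$, not for every point of $D_c$.

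The union of these segments is not the cone $\mathrm{conv}(\{a\}\cup D_c)$. Since $a$ is at positive distance from the hyperplane containing $D_c$, each point of the cone other than $a$ lies on exactly one segment $aq$ with $q\in D_c$. Hence the union is precisely the cone minus $(a,c]$. It is not convex, and it misses exactly the segment $ac$, so no contradiction arises. In the proof of Theorem \ref{9} this step is bridged by compactness, which you do not have here.

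In fact your intermediate claim is false. Take $S=\mathrm{int}B(0,1)\setminus\{0\}$. It has non-empty interior and the double right-$3$-point property, because a point that is not a vertex of a nondegenerate triangle lies on at most one of its sides (compare Theorem \ref{14}). Now let $b=(\tfrac12,0,\ldots,0)$, $\delta=\tfrac14$, $a=-b$. The segment $ab$ passes through $0$, so $\mathrm{conv}(\{a\}\cup\mathrm{int}B(b,\delta))\not\subset S$. Moreover $S$ is not starshaped with respect to $b$, or indeed any point. Your derivation of connectedness via $ab\subset S$ therefore fails as well.

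\textbf{The repair} is to keep the dichotomy rather than try to refute one branch; this is what the paper does.
\begin{itemize}
\item Either $ac\subset S$ for every $c\in\mathrm{int}B(b,\delta)$, and you have your open cone.
\item Or $ac\not\subset S$ for some such $c$. Then $\mathrm{conv}(\{a\}\cup D_c)\setminus ac\subset S$. Since $D_c$ is a $(d-1)$-disc and $d\ge 2$, the set $\mathrm{int}\,\mathrm{conv}(\{a\}\cup D_c)\setminus ac$ is a non-empty open subset of $S$ whose closure contains $a$. This is the paper's ``bounded cone with apex $x$, with $xz$ in its boundary''.
\end{itemize}
In both cases $a\in\mathrm{cl}\,\mathrm{int}S$.

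For connectedness you do not need starshapedness. For any $c\in\mathrm{int}B(b,\delta)$ and any admissible $c'$, one of $ac$, $ac'$ lies in $S$. That segment joins $a$ to the convex set $B(b,\delta)\subset S$, so $S$ is polygonally connected. The paper instead argues by contradiction: it uses a right triple with its right angle at an interior point of one component, a second vertex near it, and the third vertex in another component. This is the same underlying observation.
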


\begin{proof}
	If $S$ is disconnected, then $S$ must have a component $C_1$ with non-empty interior. Let $a \in \mathrm{int} C_1$ and $b$ belong to another component $C_2$. There exists $\epsilon > 0$ such that $B(a, \epsilon) \subset \mathrm{int} C_1$. Take $c \in H_{ab} \cap B(a, \epsilon)$. The set $\{a, b, c\}$ forms a right triple. However, $ab,bc\not \subset S$ because $\{a,c\}$ and $\{b\}$ are in different components. This contradicts the double right-$3$-point property of $S$.
	
	Let $x\in S$, $y\in \mathrm{int}S$ and $B(y,\delta )\subset S$. If the ``bounded cone" $\mathrm{conv}(\{x\}\cup\mathrm{int}~B(y,\delta))$ is included in $S$, then $x\in \mathrm{cl}~\mathrm{int}S$. If not, choose $z\in \mathrm{int}~B(y,\delta)$ such that $xz\not\subset S$. Like in the preceding proof, we see that there exists a ``bounded cone" with apex $x$, with $xz$ in its boundary, and with its interior included in $S$. Hence, $x\in \mathrm{cl}~\mathrm{int}S$.
\end{proof}

	\begin{Theorem}\label{14}
		Let~$S\subset\mathbb{R}^{d}$~be an open set.~Then~$S$~has the double right-$3$-point property,~if and only if~$S$~is convex or a convex set minus a single point set.	
	\end{Theorem}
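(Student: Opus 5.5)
For the ``if'' direction, convexity of $S$ makes the property immediate: all three sides of every right triple lie in $S$. If $S=K\setminus\{p\}$ with $K$ convex and $S$ open, then for a right triple $\{x,y,z\}\subset S$ a side $uv$ fails to lie in $S$ only if $p\in(uv)$. Since two sides of a nondegenerate triangle share only a vertex, and $p$ is not a vertex, at most one side contains $p$. Hence at least two sides lie in $S$.

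For the ``only if'' direction, assume $S$ is open and nonempty; the empty set is trivially convex. Theorem \ref{8} gives that $S$ is connected. We then show that $\mathrm{cl}S$ is convex and that $S=\mathrm{int}\,\mathrm{cl}S$ minus at most one point.

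The key local tool is the cone argument from the proof of Theorem \ref{9}. Take $a\in S$ and a ball $B(b,\delta)\subset S$. Points $c,c'$ in the ball forming a right triple with $a$ can be chosen, because $c'$ can be picked on $H_{ca}$ or on $C_{ac}$ near $b$. Then $cc'\subset S$, and double right-$3$ forces $ac\subset S$ or $ac'\subset S$. So from any $a\in S$ we see a dense set of directions toward each ball in $S$.

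To obtain convexity of $\mathrm{cl}S$, take $x,y\in S$ and show $xy\subset\mathrm{cl}S$. Apply the previous step with $a=x$ and small balls around points of $S$ near $y$. Segments from $x$ to points arbitrarily close to $y$ lie in $S$, and taking limits gives $xy\subset\mathrm{cl}S$. Hence $K:=\mathrm{int}\,\mathrm{cl}S$ is an open convex set containing $S$.

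It remains to prove that $F:=K\setminus S$ has at most one point. This is the main obstacle. First, $F$ is relatively closed in $K$. Next, suppose $p\in F$ and $F$ contains a second point $q$.

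In the first case we produce a right triple in $S$ with two sides each meeting $F$. Choose $x$ near $p$ and $y$ near $q$, positioned so that the line $\overline{xy}$ passes through $p$, with $x,y\in S$; this is possible because $S$ is open and dense in $K$. Then pick $z\in S$ so that $\{x,y,z\}$ is right-angled at $x$ and $yz$ passes through $q$. Concretely, take $z$ on $H_{xy}$ with $q\in yz$, which works when $q$ lies strictly on the appropriate side, and then perturb. To get a robust failure rather than a measure-zero one, I would exploit that $F$ is closed and use points of $F$ directly on lines: the sides $xy$ and $yz$ contain points of $F$, so neither lies in $S$, contradicting the double right-$3$-point property.

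The careful part is ensuring that $x,y,z$ all avoid $F$ and stay inside $K$ while the two lines through $p$ and $q$ keep their incidences. Since $F$ is closed with empty interior, one can slide $x$ along a line through $p$ to a point outside $F$, and similarly for the others. If $F$ is infinite, an analogous argument with many points is only easier.

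This shows $F$ is empty or a singleton, so $S=K$ or $S=K\setminus\{p\}$, as claimed in Theorem \ref{14}.
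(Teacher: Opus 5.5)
\textbf{Verdict: the proposal has a genuine gap at the step you yourself call the main obstacle.}

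\textbf{What works.} Your ``if'' direction is correct. So is the first half of the ``only if'' direction. The density-of-visible-points argument gives $xy\subset\mathrm{cl}S$ for $x,y\in S$. Hence $\mathrm{cl}S$ and $K=\mathrm{int}\,\mathrm{cl}S$ are convex, $S\subset K$, and $F=K\setminus S$ is relatively closed and nowhere dense in $K$.

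\textbf{The gap.} You never actually produce a right triple $\{x,y,z\}\subset S$ with the right angle at $x$, $p\in(xy)$ and $q\in(yz)$. Your justification is ``since $F$ is closed with empty interior, one can slide $x$ along a line through $p$ to a point outside $F$, and similarly for the others''. This fails for two reasons.

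First, a closed nowhere dense set may contain whole segments, so a given line through $p$ can lie in $F$ near $p$. Nothing you have proved excludes, say, $F$ containing a piece of a hyperplane; excluding that is the content of the theorem.

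Second, and more seriously, the three vertices are coupled. Once $x$ and $y$ are fixed, $z$ is forced to be the single point $\overline{yq}\cap H_{xy}$, so no freedom is left to move $z$ off $F$. Moving $x$ or $y$ to free $z$ moves the line through $p$, and may put $x$, $y$ or $z$ back into $F$.

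To rescue your route you would need a real transversality argument. The admissible configurations form a $(d+1)$-parameter family. One would have to show each vertex map is open near a suitable configuration, so that the preimages of the dense open set $S$ are dense open and have a common point. Nothing of this kind is in the proposal. The remarks about a ``robust failure'' and about infinite $F$ do not address it.

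\textbf{The missing idea.} The paper first extracts local structure of $S$ from the hypothesis, before looking for a bad triple. Suppose $x,y\in S$ and $xy\not\subset S$, and take a ball $B\subset S$ about $x$. For every $u\in H_{xy}\cap B$ with $u\neq x$, the triple $\{u,x,y\}$ is right and $xy\not\subset S$, so both $ux$ and $uy$ lie in $S$. Hence $\mathrm{conv}((H_{xy}\cap B)\cup\{y\})\setminus xy\subset S$. In words: near the open segment $(xy)$, the complement of $S$ is confined to $xy$ itself.

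With this cone lemma, placing the vertices of a right triple in $S$ so that two sides pass through two given bad points of $xy$ becomes unconstrained. This shows each such segment contains exactly one bad point $s$. Then $S\cup\{s\}$ is open, and a segment with endpoints in $S$ through $s$ and a second bad point $s'$ gives the contradiction.

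\textbf{How this fits your setup.} Choose $x_0,y_0\in S$ with $p\in(x_0y_0)$ and $\overline{x_0y_0}\neq\overline{pq}$, and do the same for $q$. The cone lemma then puts the points of $\overline{pq}$ just beyond $p$ and just beyond $q$ into $S$. This yields a segment with endpoints in $S$ containing two points of $F$. Your up-front convexification of $\mathrm{cl}S$ is a reasonable way to organize the proof, but it does not replace the cone lemma, which is what makes the final step work.
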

	
	\begin{proof}
		
		The ``if\," implication is obvious.
		
		For the ``only if\," implication,~assume~$S$~is not convex and let~$x,y\in S$~satisfy~$xy\not \subset S$.
		
		Since~$S$~is an open set,~there is a neighborhood~$B\subset S$~of~$x$.~Take two points~$w,z\in H_{xy}\cap B$~such that~$ x\in wz$.~Obviously,~$w,x,y$~and~$z,x,y$~form two right triples,~respectively.~Since~$xy\not \subset S$,~we get~$wx\cup wy\subset S$~and~$zx\cup zy\subset S$.~Moreover,~$wzy\backslash xy \subset S$,~because~$uy\subset S$~for all~$u\in wz\backslash \{x\}$.
		
		Assume that there are two points~$s,t\in xy\backslash S$.~Choose~$a\in zxy$, $b,c\in wzy$~such that~$s\in ab,t\in ac$~and~$\angle bac=\frac {\pi}{2}$. The double right-$3$-point property of~$S$ is contradicted.~Hence,~$xy$~contains just one point of the complement of~$S$.
		
		Let~$S'=S\cup \{s\}$,~where~$s$~is the single point of~$xy\backslash S$~found above.~Note that~$S'$~is open.~Assume~$S'$~is not convex.~Then there are two points~$x',y'\in S'$~such that~$x'y'\not \subset S'$.~There exists a point~$s'\in x'y'\backslash S'$.~Take two neighborhoods~$N\subset S'$~of~$s$~and~$N' \subset S'$~of~$s'$.~Then there must exist two points~$t \in N \backslash\{s\},t' \in N'\backslash\{s'\}$~such that~$s,s'\in tt'$,~which contradicts the fact obtained above that~$tt'$~contains just one point of the complement of~$S$.
		
		Hence~$S'$~is convex,~whence~$S$~is a convex set or a convex set minus a single point set.
	\end{proof}
	
	A polygonal path $P=\left [a_{1},a_{2},\ldots ,a_{n}\right ]$ in $\mathbb{R}^d$ is a
	{\it staircase}, if every edge of $P$ is parallel to one of the coordinate axes, and if when drawing the path, all its parallel edges point in the same
	direction \cite{BM}.
	
	We leave the easy proofs of the following two results to the reader.
	\begin{Theorem}\label{12}
		Any simple arc with the double right-$3$-point property in the plane either does not contain right triples or, after an appropriate rotation, contains a staircase.
	\end{Theorem}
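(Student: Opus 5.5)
The strategy is to take an arbitrary right triple in the arc and show that the double right-$3$-point property forces a staircase to appear. Let $S\subset\mathbb{R}^2$ be a simple arc with the double right-$3$-point property. If $S$ contains no right triple we are done, so assume $\{x,y,z\}\subset S$ is a right triple with the right angle at $y$. By the property, at least two of $xy,yz,xz$ lie in $S$.

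We first rule out the hypotenuse. Suppose $xz\subset S$ together with one leg, say $xy$. Then $xy\cup xz$ is a union of two non-collinear segments meeting at $x$, and this union lies in $S$. Since $S$ is a simple arc, it contains no closed curve. Hence $yz\not\subset S$, and the arc runs $y\to x\to z$ locally along these two segments, meeting at the acute angle at $x$.

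We now derive a contradiction from this configuration. Choose points $u\in (xz)$ near $x$ and $v\in (xy)$ with $\angle xvu=\pi/2$; that is, $v$ is the foot of the perpendicular from $u$ onto $\overline{xy}$. Then $\{x,u,v\}$ is a right triple contained in $S$, and the property requires two of $xu,xv,uv$ to lie in $S$. The segment $uv$ is not contained in $S$: it joins two points of the arc through the open region between the two segments, and it would create a cycle $x\to u\to v\to x$. So $xu$ and $xv$ must both lie in $S$, which is consistent. To obtain a contradiction we instead use a right triple that avoids one of the segments. Take $u\in(xz)$ and $w\in(xy)$ with the right angle at $u$, together with the point $y$ or $z$ as needed, arranged so that two of the three sides are chords that leave $S$. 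The concrete choice is the right triple $\{u,w,y\}$ with $\angle uwy=\pi/2$ and $w$ on $\overline{xy}$. Here $uw\not\subset S$, and $uy\not\subset S$ unless the arc contains it, which it cannot without forming a cycle. This contradicts the property.

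It follows that in every right triple of $S$ both legs lie in $S$ and the hypotenuse does not. In particular, $S$ contains two perpendicular segments $yx$ and $yz$ meeting at $y$. We rotate so that these segments are axis-parallel, which produces a path $[x,y,z]$ whose edges are parallel to the coordinate axes.

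The staircase condition, that parallel edges point in the same direction, is then forced. If the arc turned back, say along $[a,b,c,d]$ with $ab$ and $cd$ antiparallel, then the points $a$, $b$ and a suitable point of $cd$ would give a right triple whose legs are not both in $S$. Thus $[x,y,z]$, possibly extended, is a staircase contained in $S$.

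The main obstacle is the second step, excluding the hypotenuse. Choosing the auxiliary right triple near the acute vertex so that two of its sides genuinely fail to lie in the arc requires care, and this is where the simple-arc (no cycle) hypothesis must be used precisely.
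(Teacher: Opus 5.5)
Your proof is correct. The paper gives no proof of this result (it is "left to the reader"), so there is no argument of theirs to compare with; your proposal supplies one.

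The decisive step is the triple $\{u,w,y\}$, where $u\in(xz)$ and $w$ is the foot of the perpendicular from $u$ to $\overline{xy}$.

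\begin{itemize}
\item Since the right angle of $\{x,y,z\}$ is at $y$, the point $y$ is the foot of $z$ on $\overline{xy}$. Hence $w\in(xy)$ for every $u\in(xz)$. You should state this reason explicitly, because the cycle argument needs $xw\subset xy\subset S$.
\item Then $xu$, $xw$ and $xy$ all lie in $S$. So $uw\subset S$ would close the nondegenerate triangle $xuw$ inside the arc, and $uy\subset S$ would close the triangle $xuy$.
\item Hence at most the side $wy$ lies in $S$, which contradicts the double property.
\end{itemize}

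The write-up should be tidied in three places:
\begin{itemize}
\item Drop the detour through $\{x,u,v\}$, which yields no contradiction.
\item Fix the slip ``with the right angle at $u$''; in your concrete choice the right angle is at $w$.
\item Remove the final ``turning back'' paragraph. The path $[x,y,z]$ has no two parallel edges, so it is already a staircase after rotation. As written, that sketch is also incomplete: a suitable point of $cd$ need not exist, for instance when $cd$ is short and $bc$ is long. In that case one uses instead the point $q\in ab$ below $d$ and the triple $\{q,c,d\}$.
\end{itemize}

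What you actually establish is that every right triple in $S$ has exactly its two legs in $S$. This is also the key fact behind the companion theorem on $[a,b,c,d]$.
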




 \begin{Theorem}
 	Let $S\subset \mathbb{R}^{2}$ be a simple arc with the double right-$3$-point property. If $\{a, b, c\}$ and $\{b, c, d\}$ are right triples in $S$, then, after an appropriate rotation, $[a, b, c, d]$ is a staircase included in $S$.
 \end{Theorem}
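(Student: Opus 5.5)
The first step is to extract segments. Since $\{a,b,c\}$ is a right triple in $S$, the double right-$3$-point property gives at least two of $ab$, $bc$, $ac$ in $S$. If two segments sharing a vertex lie in the simple arc $S$ and are not collinear, then their union is a subarc of $S$; this constrains the order of $a,b,c$ along $S$.

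I would show that $ab\cup bc\subset S$. Suppose instead that $ac\subset S$ together with one of $ab$, $bc$, say $ab$. Then $S$ contains the broken line $[c,a,b]$. The idea is to use $\{b,c,d\}$ to force a third side. Two of $bc$, $bd$, $cd$ lie in $S$. If $bc\subset S$, the whole triangle boundary $ab\cup bc\cup ca$ lies in $S$, which is a simple closed curve inside a simple arc, impossible. Hence $bd\cup cd\subset S$. Then $[d,c,a,b,d]$ is a closed curve in $S$. Since $S$ is an arc, this closed curve must be degenerate, so $d$ lies on $ab$ or $ac$ in a way that makes $cd$ or $bd$ retrace. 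Checking these degenerate placements against the right angles of both triangles should give a contradiction. The case $\{ac,bc\}$ is symmetric with the roles of $a$ and $b$ swapped. So $ab\cup bc\subset S$, and symmetrically, from the second triple, $bc\cup cd\subset S$. Hence $[a,b,c,d]\subset S$.

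Next I would locate the right angles. Since $ab$ and $bc$ are both edges in the arc and $ac\not\subset S$, the right angle of $\{a,b,c\}$ should be at $b$. Otherwise the hypotenuse is $ab$ or $bc$, and I would use Theorem~\ref{12}-type reasoning, or the non-existence of the third side, to exclude it. One also has to rule out the case where the right angle is at $a$ or $c$ and $ac$ is a leg not contained in $S$; this is compatible with the property, so it needs a separate argument. I expect this to be the main obstacle. My first attempt would be to build an auxiliary right triple from points near $b$ on $ab$ and $bc$ together with a third point, and show that it violates the double property unless the angle at $b$ is $\pi/2$. Similarly, the right angle of $\{b,c,d\}$ is at $c$.

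Finally, I rotate so that $ab$ is horizontal. Then $bc$ is vertical and $cd$ is horizontal. It remains to show that $ab$ and $cd$ point in the same direction, i.e. that $a$ and $d$ lie on opposite sides of the line $\overline{bc}$. If they pointed in opposite directions (a ``U'' shape), I would take points $p\in ab$ and $q\in cd$ at equal horizontal offsets and a suitable point on $bc$ forming a right triple with them. For such a triple only one of its segments lies in $S$, contradicting the double right-$3$-point property. This yields that $[a,b,c,d]$ is a staircase contained in $S$.
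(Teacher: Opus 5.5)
There is a genuine gap in your first step, at the sentence ``The case $\{ac,bc\}$ is symmetric with the roles of $a$ and $b$ swapped.'' The paper leaves this proof to the reader, so your argument has to stand on its own. The swap is not a symmetry, because the second triple $\{b,c,d\}$ contains $b$ but not $a$.

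In this case $[a,c,b]\subset S$ with $c$ in the middle. The triple $\{b,c,d\}$ can then be served either by $bc\cup cd$ with $d$ on the ray from $c$ through $a$, or by $bc\cup bd$, and both really occur:
\begin{itemize}
\item Take the monotone staircase $S=[(0,0),(1,0),(1,1)]$. It has the double right-$3$-point property, because in a monotone staircase every right triple has its right angle at a corner with both legs in $S$. Put $a=(1,1/2)$, $b=(0,0)$, $c=(1,0)$, $d=(1,1)$. Then $\{a,b,c\}$ and $\{b,c,d\}$ are right triples, both right-angled at $c$, while $ab\not\subset S$.
\item Likewise take $S=[(0,1),(0,0),(1,0),(1,-1)]$ with $a=(0,1)$, $c=(0,0)$, $b=(1,0)$, $d=(1,-1)$.
\end{itemize}
So $ab\cup bc\subset S$ cannot be derived, and your Step 2 claim that the right angle of $\{a,b,c\}$ is at $b$ is false in these examples. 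The statement read literally is false. It needs $\angle abc=\angle bcd=\pi/2$ (or $a,b,c,d$ occurring in this order along $S$), a hypothesis your argument never uses.

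The missing idea is exactly what you call the main obstacle. It is the following lemma: if $\{p,q,r\}$ is a right triple in $S$, then exactly two of its sides lie in $S$ (three would put a triangle inside an arc), these share a vertex $q$ lying between $p$ and $r$ on $S$, and the right angle is at $q$.

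To prove it, suppose the right angle were at $p$. Choose $u\in(pq)$ and $w\in(qr)$ with $uw\parallel pr$. Then $\{p,u,w\}$ is a right triple with its right angle at $u$, and $pu\subset S$. But $uw\subset S$ or $pw\subset S$ would place the boundary of the triangle $uqw$ or $pqw$ inside the arc $S$. So only one side lies in $S$, a contradiction.

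With this lemma the proof goes through:
\begin{itemize}
\item Given $\angle abc=\pi/2$, the lemma forces the shared vertex of $\{a,b,c\}$ to be $b$, hence $ab\cup bc\subset S$; similarly $bc\cup cd\subset S$.
\item Under the ordering reading, the shared vertices are $b$ and $c$ by position, and the lemma puts the right angles there.
\end{itemize}
Your final step, excluding the U-shape via the triple $\{p,b,q\}$ with $p\in ab$ and $q\in cd$ equidistant from $\overline{bc}$, is correct and completes the proof.
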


%
%
	
\begin{Theorem}\label{10}
	If a bounded set $S \subset \mathbb{R}^d$ with non-empty interior has the double right-$3$-point property, then $\mathrm{cl}S$ also has the double right-$3$-point property.
\end{Theorem}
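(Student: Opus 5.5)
The plan is a direct approximation argument: approximate a right triple of $\mathrm{cl}S$ by right triples of $S$, apply the double right-$3$-point property to each approximating triple, and pass to the limit exactly as in the proof that the closure of an $m$-point convex set is $m$-point convex. Theorem \ref{8} supplies the key extra ingredient, namely $S\subset \mathrm{cl}\,\mathrm{int}S$. Hence $\mathrm{cl}S=\mathrm{cl}\,\mathrm{int}S$, so every point of $\mathrm{cl}S$ is a limit of interior points of $S$, and around such points there are whole balls contained in $S$.

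Let $\{x,y,z\}\subset \mathrm{cl}S$ be a right triple with the right angle at $z$, so $z\in C_{xy}\setminus\{x,y\}$. First I choose sequences $x_n\to x$, $y_n\to y$ with $x_n,y_n\in \mathrm{int}S$. The delicate point is the third vertex, because the right angle must be preserved exactly. Since $z\in \mathrm{cl}\,\mathrm{int}S$, for each $n$ there are $w_n\in \mathrm{int}S$ and $r_n>0$ with $\|w_n-z\|<1/n$ and $B(w_n,r_n)\subset S$. The hyperspheres $C_{x_ny_n}$ converge to $C_{xy}$, which passes through $z$, so $\mathrm{dist}(w_n, C_{x_ny_n})\to 0$. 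I would choose the approximations $x_n,y_n$ carefully, using that they may be taken arbitrarily close to $x,y$ after $w_n,r_n$ are fixed, so that $\mathrm{dist}(w_n,C_{x_ny_n})<r_n$. Then I can pick $z_n\in C_{x_ny_n}\cap B(w_n,r_n)\subset S$ with $z_n\to z$. This ordering of choices (first $w_n,r_n$, then $x_n,y_n$ close enough, then $z_n$) is the main obstacle, and I expect it to need the most care. For large $n$, $z_n$ is distinct from $x_n,y_n$ because $z\ne x,y$, so $\{x_n,y_n,z_n\}$ is a genuine right triple in $S$ with the right angle at $z_n$.

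By the double right-$3$-point property of $S$, for every $n$ at least two of the segments $x_ny_n$, $x_nz_n$, $y_nz_n$ lie in $S$. There are only three possible pairs of segments, so one fixed pair occurs for infinitely many $n$; say $x_nz_n\cup y_nz_n\subset S$ along a subsequence. Since $x_nz_n\to xz$ and $y_nz_n\to yz$ in the Hausdorff sense, every point of $xz$ and of $yz$ is a limit of points of $S$. Thus $xz\cup yz\subset\mathrm{cl}S$, and the other pairs are handled identically. Boundedness of $S$ only ensures that all the sequences stay in a compact set, so subsequences can be extracted freely. This shows that $\mathrm{cl}S$ has the double right-$3$-point property.
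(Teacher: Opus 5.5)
Your final limit step is fine, but the approximation step, which you flagged as the main obstacle, is not resolved by the proposed ordering of choices. Once $w_n$ and $r_n$ are fixed, letting $x_n\to x$ and $y_n\to y$ only drives $\mathrm{dist}(w_n,C_{x_ny_n})$ to $\mathrm{dist}(w_n,C_{xy})$. That is a fixed number at most $\|w_n-z\|$, but there is no reason for it to be smaller than $r_n$. If $z\notin\mathrm{int}S$, then also $r_n\le\|w_n-z\|$, so you are comparing two null sequences with no control on their ratio.

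No choice scheme can work using only $\mathrm{cl}S=\mathrm{cl}\,\mathrm{int}S$. In $\mathbb{R}^2$ take $x=(1,0)$, $y=(0,1)$, $z=(0,0)$. Let $S$ be the union of small open pieces of $\{p: p_2>|p_1-1|\}$, $\{q: q_1>|q_2-1|\}$ and the open third quadrant, near $x$, $y$, $z$ respectively. For $p,q,r$ in these pieces,
\[
(p-r)\cdot(q-r)=(p_1-r_1)(q_1-r_1)+(p_2-r_2)(q_2-r_2)>0,
\]
since all four factors are positive, and the angles at $p,q$ stay near $\pi/4$. So no right triple of $S$ converges to the right triple $\{x,y,z\}$ of $\mathrm{cl}S$. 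Indeed, every ball in $S$ near $z$ misses every $C_{pq}$, which is exactly where your choice of $z_n$ breaks down. This $S$ lacks the double right-$3$-point property, so it is no counterexample to the theorem. What it shows is that the approximation has to exploit that property substantially, whereas your argument uses it only in the trivial final limit.

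The missing ingredient is the convexity of $\mathrm{cl}S$, and this is what the paper proves, with no approximation at all.
\begin{itemize}
\item Convex sets trivially have the double right-$3$-point property, so it suffices to show $\mathrm{cl}S$ is convex.
\item By Theorem~\ref{8}, $S$ is connected and $\mathrm{cl}S=\mathrm{cl}\,\mathrm{int}S$. Boundedness makes $\mathrm{cl}S$ a continuum with non-empty interior; this is where boundedness genuinely enters, not merely for extracting subsequences.
\item If $\mathrm{cl}S$ were not convex, Tietze's theorem gives a point $a$ at which it is not locally convex.
\item Arbitrarily close to $a$ one then finds $x,y\in\mathrm{int}S$ with $xy\not\subset S$, and a point $z$ near $y$ with $\overline{xy}\perp\overline{yz}$ and $xz\not\subset S$. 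This is a right triple of $S$ with two sides missing, a contradiction.
\end{itemize}
Once $\mathrm{cl}S$ is known to be convex, the statement follows at once, and your limit argument becomes unnecessary.
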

	
	\begin{proof}
		By Theorem \ref{8}, $S$ is connected and satisfies $\mathrm{cl}S = \mathrm{cl}~\mathrm{int}S$, whence $\mathrm{int}~\mathrm{cl}S\supset \mathrm{int}S\ne \emptyset$. By Theorem \ref{9}, if $\mathrm{cl}S$ does not have the double right-$3$-point property, then it is not convex. By Tietze's theorem, there exists a point $a \in \mathrm{cl}S$ such that $\mathrm{cl}S$ is not locally convex at $a$. For any $\epsilon > 0$, there exist points $x, y \in B(a, \epsilon) \cap \mathrm{int}S$ such that $xy \not \subset S$. Consider a point $z$ close to $y$, such that $\overline{xy} \perp \overline{yz}$ and $xz \not\subset S$. $\{x, y, z\}$ forms a right triple. However, $xy, xz \not\subset S$. This is a contradiction.
	\end{proof}
	
\begin{Theorem}\label{11}
	If a bounded set $S \subset \mathbb{R}^d$ with non-empty interior has the double right-$3$-point property, then $\mathrm{int}S$ also has the double right-$3$-point property.
\end{Theorem}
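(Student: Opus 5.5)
We reduce to Theorem \ref{14}: since $\mathrm{int}S$ is open, it suffices to show that $\mathrm{int}S$ is convex or a convex set minus one point. Theorem \ref{14} then gives the double right-$3$-point property for $\mathrm{int}S$. The plan is first to show that $\mathrm{cl}S$ is convex, and then to compare $\mathrm{int}S$ with $\mathrm{int}\,\mathrm{cl}S$.

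\emph{Step 1.} By Theorem \ref{10}, $\mathrm{cl}S$ has the double right-$3$-point property. Since $S$ is bounded, $\mathrm{cl}S$ is compact. By Theorem \ref{8}, $S$ is connected, so $\mathrm{cl}S$ is a continuum. It has non-empty interior because $\mathrm{int}\,\mathrm{cl}S\supset\mathrm{int}S\neq\emptyset$. Theorem \ref{9} then shows that $K=\mathrm{cl}S$ is a convex body, so $\mathrm{int}K$ is an open convex set. Moreover $\mathrm{int}S\subset\mathrm{int}K$, and $\mathrm{int}S$ is dense in $\mathrm{int}K$, since $\mathrm{cl}\,\mathrm{int}S=\mathrm{cl}S=K$ by Theorem \ref{8}.

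\emph{Step 2.} We show that $E=\mathrm{int}K\setminus S$ contains at most one point. Then $\mathrm{int}S=\mathrm{int}K\setminus E$, because removing one point from an open set leaves it open. Suppose $s\neq t$ are two points of $\mathrm{int}K\setminus S$. Imitate the proof of Theorem \ref{14}: choose a point $a\in\mathrm{int}S$ and points $b,c$ so that $s\in(ab)$, $t\in(ac)$ and $\angle bac=\pi/2$. Then $\{a,b,c\}$ is a right triple with $ab,ac\not\subset S$, which contradicts the double right-$3$-point property of $S$. The construction goes as follows. Pick $a\in\mathrm{int}S$ near a point of the sphere with diameter $st$ (away from the line $\overline{st}$), so that $\angle sat$ is close to $\pi/2$. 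Since $\angle sat$ varies continuously with $a$ and $\mathrm{int}S$ is dense in $\mathrm{int}K$, we can pick such an $a$ in $\mathrm{int}S$ on this sphere up to arbitrarily small error. We need exactly $\angle sat=\pi/2$. Points of $\mathrm{int}S$ lie on the open sphere with diameter $st$ only if $\mathrm{int}S$ meets it, and $\mathrm{int}S$ is open and dense in $\mathrm{int}K$, so it meets that sphere inside $\mathrm{int}K$. Then extend the rays $as$ and $at$ slightly beyond $s$ and $t$ to points $b$ and $c$. Since $\mathrm{int}K$ is open and $\mathrm{int}S$ is dense, we can choose $b,c\in S$ after a small perturbation.

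\emph{Main obstacle.} The perturbation must keep the angle at $a$ exactly right while keeping $b,c\in S$ and $s\in(ab)$, $t\in(ac)$. We handle it by fixing $a$ first and then choosing $b$ in $\mathrm{int}S$ on the ray from $a$ through $s$, just past $s$. Such a $b$ exists because that ray beyond $s$ meets the open set $\mathrm{int}K$ in a segment, and $S$ is dense there. Density along a segment is not automatic, so this point needs care. We instead use that $S\supset\mathrm{int}K\setminus E$: by Step 2 applied iteratively, the points of $E$ are rare. Concretely, choose $s,t$ first. The set of admissible configurations $(a,b,c)$ with $a,b,c\in\mathrm{int}K$ has non-empty interior in the manifold of such right-angled configurations through $s$ and $t$. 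Since $\mathrm{int}S$ is open and dense, the conditions $a,b,c\in\mathrm{int}S$ define an open dense subset of that configuration space, so a valid configuration exists. This gives $|E|\le1$. By Theorem \ref{14}, or directly, $\mathrm{int}S$ is convex or a convex set minus a point, and so it has the double right-$3$-point property.
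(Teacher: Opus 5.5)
Your Step 1 and the reduction ``$|E|\le 1$ implies $\mathrm{int}S=\mathrm{int}K\setminus E$'' are sound. Step 2, however, has a genuine gap at exactly the point you call the main obstacle, and none of your proposed fixes closes it. Your right triple needs $a\in S$ lying \emph{exactly} on the sphere $C_{st}$, and $b,c\in S$ lying exactly on the rays from $a$ through $s$ and $t$. The only information about $S$ you use is that $\mathrm{int}S$ is open and dense in $\mathrm{int}K$, and that is not enough. For example, $\mathrm{int}K\setminus C_{st}$ is open and dense in $\mathrm{int}K$ yet misses $C_{st}$, so ``$\mathrm{int}S$ is open and dense, so it meets that sphere'' is a non sequitur. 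The configuration-space argument fails for the same reason. The preimage of an open dense set is dense under an open map, but $(a,b,c)\mapsto a$ takes values in the hypersurface $C_{st}$, so the condition $a\in\mathrm{int}S$ may cut out an empty set. Finally, ``by Step 2 applied iteratively, the points of $E$ are rare'' is circular, since Step 2 is what you are proving. In the proof of Theorem~\ref{14}, which you imitate, the corresponding step works only because it was first shown that the whole region $wzy\setminus xy$ lies in $S$. That ingredient is what you dropped.

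The missing idea is to extract local structure from the double right-$3$-point property itself, as the paper does with its cone argument and its Remark. Suppose $B(x,\epsilon)\subset S$, $y\in S$, and some $s\in(xy)$ is not in $S$. For every $w\neq x$ in $H_{xy}\cap B(x,\epsilon)$, the right triple $\{w,x,y\}$ has $xy\not\subset S$, hence $wy\subset S$. Thus $\mathrm{conv}((H_{xy}\cap B(x,\epsilon))\cup\{y\})\setminus xy\subset S$, and this set contains $B(s,\rho)\setminus xy$ for some $\rho>0$. For $s\in E$, such $x,y$ exist. Pick $x\in\mathrm{int}S$ near $s$ with $B(x,\epsilon)\subset S$. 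Then pick $y\in\mathrm{int}S$ in the non-empty open set $\{s+\lambda(s-x'):x'\in B(x,\epsilon),\,0<\lambda<1\}\subset\mathrm{int}K$, which $\mathrm{int}S$ meets by density. Doing this for two non-parallel segments through $s$ gives $B(s,\rho)\setminus\{s\}\subset S$, and likewise for $t$. Only now does your construction go through. Take $a\neq s$ on $C_{st}$ close to $s$, $b$ just beyond $s$ on the ray from $a$ through $s$, and $c$ just beyond $t$ on the ray from $a$ through $t$. All three lie in $S$, $\angle bac=\pi/2$, and $s\in ab$, $t\in ac$, contradicting the property. With this lemma inserted, your route (convexity of $\mathrm{cl}S$ plus $|E|\le 1$) is a valid and somewhat tidier alternative to the paper's segment-by-segment argument. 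Without it, Step 2 is unproved.
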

	\begin{proof}
		We start with the following.

\underline{Remark}. If $u,v\notin S$, we cannot have both $B(u,\epsilon )\setminus \{u\}\subset S$ and $B(v,\epsilon )\setminus \{v\}\subset S$.

Indeed, we take points $p,p'\in S$ close to $u$ and $q\in S$ close to $v$ such that $u\in pp'$, $v\in pq$ and $\overline{pp'} \perp \overline{pq}$. Obviously, $pp'\not \subset S$ and $pq\not \subset S$. This contradicts the double right-$3$-point property of $S$.
		
		By Theorem \ref{14}, we just need to prove that $\mathrm{int}S$ is convex or a convex set minus a single point set.
		
		Assume that $\mathrm{int}S$ is not convex. Then there exist points $x,y\in \mathrm{int}S$ such that $xy\not\subset \mathrm{int}S$. Let $z\in xy \setminus \mathrm{int}S$. We claim that $z\notin S$.
		
		Let us prove it. If $z\in S$, then $z\in \mathrm{bd}S$. Since $x,y\in \mathrm{int}S$, there exists $\epsilon > 0$ such that $B(x,\epsilon )\subset S$ and $B(y,\epsilon )\subset S$. There exists a point ${z}^{\prime}\in B(z,\epsilon )$ such that ${z}^{\prime}\notin S$. We can find points ${x}^{\prime}\in B(x,\epsilon )$ and ${y}^{\prime}\in B(y,\epsilon )$ such that ${z}^{\prime}\in {x}^{\prime}{y}^{\prime}$.
		
			\begin{figure}[htbp]
				\centering
				\includegraphics[width=9cm]{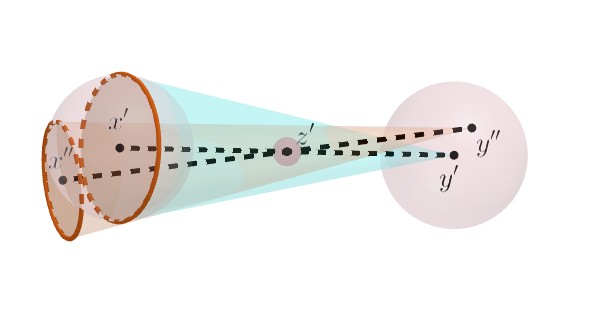}
				\caption{Illustration for Theorem \ref{11}.}
				\label{Theorem 3.8}
			\end{figure}
		
		
		Consider ${\epsilon}^{\prime}> 0$ such that $B({x}^{\prime},{\epsilon}^{\prime})\subset S$. By the double right-$3$-point property of $S$, it follows that $\mathrm{conv}(({H}_{{x}^{\prime}{y}^{\prime}}\cap B({x}^{\prime},{\epsilon}^{\prime}))\cup \{{y}^{\prime}\})\setminus {x}^{\prime}{y}^{\prime}\subset S$. We find points ${x}''\in B({x}^{\prime},\epsilon )$ and ${y}''\in B({y}^{\prime},\epsilon )$ such that $\{z^{\prime}\}={x}^{\prime}{y}^{\prime}\cap {x}''{y}''$. Analogously, $\mathrm{conv}(({H}_{{x}''{y}''}\cap B({x}'',\nu ))\cup \{{y}''\})\setminus {x}''{y}''\subset S$, for some $\nu > 0$. It follows that there exists $\delta > 0$ such that $B({z}^{\prime},\delta )\setminus \{{z}^{\prime}\}\subset S$.
		
		Let ${\epsilon }_{1}<\| z-{z}^{\prime} \|$. Analogously, we can find ${z}_{1}\in B(z,\epsilon _{1})\setminus S$ and ${\delta }> 0$, such that $B({z}_{1},{\delta })\setminus \{{z}_{1}\}\subset S$. This contradicts our Remark. This proves the claim.
		
         Now, we prove that the point $z$ is the only one in $xy\setminus (\mathrm{int}S)$. Indeed, if $z_2\in xy$ is another point with $z_2\notin \mathrm{int}S$, then $z_2\notin S$ by our claim. By the double right-$3$-point property of $S$, we conclude that $\mathrm{conv}(({H}_{{x}{y}}\cap B({x},\epsilon))\cup \{y\})\setminus xy\subset S$. Applying the preceding argument again, there exists ${\delta}_1>0$ such that $B(z,{\delta}_1)\setminus \{z\}\subset S$ and $B(z_2,{\delta}_1)\setminus \{z_2\}\subset S$. By the Remark, this is impossible.

         Next, we will show that, for any two points $x_1, y_1\in \mathrm{int}S$, if $z\notin x_1y_1$, then $x_1y_1\subset \mathrm{int}S$. Suppose that this is not the case. Then, by an analogous argument, we can find another point $t\in x_1y_1$ such that a whole neighborhood of $t$ minus $\{t\}$ is included in $S$. Since $B(z,{\delta})\setminus \{z\}\subset S$, a contradiction is provided by our Remark.
		
		 Hence, $\mathrm{int}S\cup \{z\}$ is convex. Consequently, $\mathrm{int}S$ is a convex set minus the single point set $\{z\}$.
	\end{proof}
%
%

In fact, Theorems \ref{10} and \ref{11} show that even a bounded set that is neither open nor closed, but has non-empty interior and the double right-$3$-point property, is still ``well-behaved''. Its closure is convex, and its interior is convex or a convex set minus a single point set.

\begin{Theorem}
		Any compact set in $\mathbb{R}^d$ with the double right-$3$-point property, which includes a right triple $\{a,b,c\}$ and meets $\mathrm{int}~abc$, is convex.
\end{Theorem}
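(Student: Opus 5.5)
The plan is to reduce to Theorem~\ref{9}: it suffices to show that $S$ is connected and has non-empty interior (a compact connected set is a continuum), since Theorem~\ref{9} then gives convexity. Let $\{a,b,c\}\subset S$ be the given right triple, with the right angle at $a$, say, and let $p\in S\cap\mathrm{int}~abc$. First apply the double right-$3$-point property to $\{a,b,c\}$: at least two of $ab,ac,bc$ lie in $S$. So $S$ contains a broken line through all three vertices, and in particular $a,b,c$ lie in one component of $S$.

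Next, use $p$ to produce two-dimensional pieces of $S$ inside the plane $\Pi=\overline{abc}$. For a point $q$ on one of the segments already in $S$, I would choose $q$ so that $\{p,q,r\}$ is a right triple for suitable $r\in S$. One can take $q,r$ on the two legs $ab,ac$ if both lie in $S$, or on a leg and the hypotenuse otherwise; the angle at $q$ is made right by choosing $q$ as the foot of the perpendicular from $p$. The property then forces at least two of $pq,pr,qr$ into $S$. Varying the points continuously along the segments, compactness of $S$ shows that whole segments $pq$ lie in $S$ for a continuum of $q$. This yields a fan of segments from $p$ to an arc of $\mathrm{bd}~abc$, so a relatively open planar region of $abc$ lies in $S$, and $p$ is joined to $a,b,c$.

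To obtain full-dimensional interior in $\mathbb{R}^d$, I would then take a small planar disc $D\subset S\cap\Pi$ and any point $x\in S$. For $u\in D$, a point $v\in D$ with $uv\perp ux$ exists whenever $x\notin\Pi$, or after an appropriate choice when $x\in\Pi$. So $\{x,u,v\}$ is a right triple, and two of its three segments lie in $S$; hence $xu\subset S$ or $xv\subset S$. Passing to limits as in the proof of Theorem~\ref{9}, one gets $\mathrm{conv}(\{x\}\cup D')\subset S$ for a subdisc $D'$. This shows that $S$ is connected, since every point is joined to $D$. It should also give non-empty interior, provided $S\not\subset\Pi$: combining two such cones over $D'$ from points $x$ on either side, or a cone with a planar disc, yields an open set. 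If instead $S\subset\Pi$, work in $\Pi$ with $d=2$, where the planar disc is already interior. If $\mathrm{aff}~S$ is lower-dimensional than $\mathbb{R}^d$, Theorem~\ref{9} should be applied relative to $\mathrm{aff}~S$; its proof is intrinsic, using relative interior.

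The main obstacle is the second step. The double property gives two segments per triple, but not a prescribed pair, so one must organize the case analysis (which two of $ab,ac,bc$ lie in $S$, and which two of $pq,pr,qr$) so that a limiting argument really produces a two-dimensional fan. I would first try fixing $p$ and sliding $q$ along a leg contained in $S$, noting that $qr\subset S$ can hold for only limited configurations without already giving the fan.
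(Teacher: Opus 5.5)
\textbf{There is a genuine gap in your second step, the heart of the planar case.} Your outer structure matches the paper: reduce to Theorems~\ref{8} and~\ref{9}, then cone over a planar disc using right triples with the right angle at the disc point. But the second step, which you flag as the main obstacle, is left open, and the mechanism you sketch cannot produce a fan at a fixed $p$.

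Suppose $q$ is the foot of the perpendicular from $p$ to a side $L\subset S$ and the right angle is at $q$. Then $q$ is unique, so there is no ``continuum of $q$''. Also $r$ must lie on the line through $L$, so putting $r$ on another side forces $r$ to be a vertex.

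The productive choice is $r\in L$ close to $q$. Then $qr\subset S$ automatically; this is what makes the argument work, not an obstruction. If $pq\not\subset S$, the property forces $pr\subset S$ for all such $r$, and letting $r\to q$ with compactness gives $pq\subset S$. But this yields only one segment from $p$: once $pq,qr\subset S$, the triple $\{p,q,r\}$ says nothing about $pr$.

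The missing idea is to move the apex along the segment just obtained. Every point $p'$ of $pq$ lies in $S\cap abc$. The double property on $\{a,b,c\}$ puts two sides in $S$, so the same argument applied to the second side $L'\subset S$ gives $p'q'_{p'}\subset S$. Here $q'_{p'}$ is the foot of the perpendicular from $p'$, and it lies on $L'$ because a right triangle has no obtuse angle. These segments are all perpendicular to $L'$, and their starting points run along $pq\perp L$ with $L\nparallel L'$. So they sweep a planar region with non-empty relative interior. This is exactly the paper's argument: first $um\subset S$, then $pq_p\subset S$ for every $p\in um$, whence $ubm\subset S$.

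\textbf{A smaller issue is in your last step.} A cone over a planar disc is only $3$-dimensional, so ``combining two cones yields an open set'' is true only for $d=3$. Applying Theorem~\ref{9} inside $\mathrm{aff}~S$ still needs a piece of full dimension there. There are two ways to fix this.
\begin{itemize}
\item Iterate the cone construction. Take a relatively open ball $U\subset S$ of an affine subspace of dimension at least $2$, and $v\in S\setminus\mathrm{aff}~U$. The same right-triple argument gives $\mathrm{conv}(\{v\}\cup U)\subset S$, which raises the dimension by one.
\item Bypass interiors altogether. For $x,y\in S$ and a planar disc $D\subset S$, the shrunken copies $(1-t)x+tD\subset\mathrm{conv}(\{x\}\cup D)\subset S$ are again planar discs in $S$. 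Hence $y\bigl((1-t)x+td_0\bigr)\subset S$ for $d_0\in D$, and $t\to 0$ gives $xy\subset S$ directly.
\end{itemize}
For comparison, the paper first applies the planar case to $S\cap\overline{abc}$, which inherits the property, to get $abc\subset S$. It then cones over $\mathrm{int}~abc$.
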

	
\begin{proof}
		\begin{figure}[htbp]
		\centering
		\includegraphics[width=7cm]{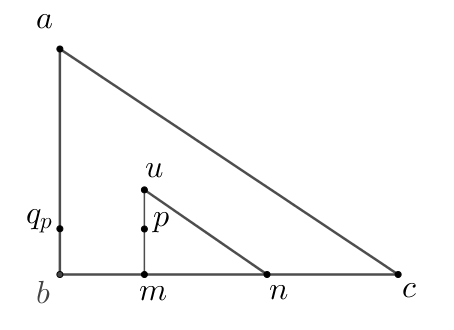}
		\caption{$d=2$ and $\mathrm{int}S \ne \emptyset$.}
		\label{Theorem 3.10}
		\end{figure}
First, suppose $d=2$ and let $S$ be a set satisfying the conditions of the statement. Take $u \in S$ such that $u \in \mathrm{int}~abc$. Take $m \in (bc)$ such that $\overline{um}\perp \overline{bc}$. We claim that $um \subset S$. Indeed, if $um \not \subset S$, then, for any $n \in (mc)$, $un \subset S$ by the double right-$3$-point property of $S$, and so $um \subset S$ by the compactness of $S$, a contradiction.
		
For every $p \in um$, take $q_p \in ab$ such that $\overline{pq_p }\perp \overline{ab}$. Analogously, we have $pq_p \subset S$. Therefore, $ubm \subset S$, and furthermore, $\mathrm{int}S \ne \emptyset$. By Theorem \ref{8}, $S$ is a continuum. By Theorem \ref{9}, $S$ is convex.
		
Now, let $S\subset \mathbb{R}^d$. If $S\subset \overline{abc}$, we are ready. If not, still $S\supset abc$. Take $v\in S\setminus \overline{abc}$ and $u\in \mathrm{int}~abc$. We show that $vu\subset S$. Indeed, consider $w\in \mathrm{int}~abc$ close to $u$ such that $\overline{wu} \perp \overline{vu}$. By the double right-$3$-point property of $S$, $uv$ or $vw$ lies in $S$. This together with the compactness of $S$ imply $uv\subset S$. Hence, $\mathrm{conv}(\{v\}\cup \mathrm{int}~abc)\subset S$. By Theorem \ref{8}, $S$ is a continuum, and by Theorem \ref{9}, it is convex.
	\end{proof}

\section{Right-$3$-point property}

\begin{lem}\label{13}
		Every simple closed curve in $\mathbb{R}^d$ contains a right triple.
	\end{lem}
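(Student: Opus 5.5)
The plan is to use Thales' theorem. Let $S$ be a simple closed curve and choose two points $x,y\in S$ with $\|x-y\|=\mathrm{diam}\,S$; they exist because $S$ is compact. We look for a third point $z\in S\setminus\{x,y\}$ with $\angle xzy=\frac{\pi}{2}$, that is, a point of $S$ on the hypersphere $C_{xy}$ of diameter $xy$. Such a $z$ makes $\{x,y,z\}$ a right triple, since $\mathrm{conv}\{x,y,z\}$ is then a nondegenerate right triangle.

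The curve $S\setminus\{x,y\}$ splits into two open arcs $\alpha,\beta$, each joining $x$ to $y$. Consider the continuous function $f(p)=\|p-(x+y)/2\|-\|x-y\|/2$ on $S$. It vanishes at $x$ and $y$, and $f(p)=0$ for $p\neq x,y$ exactly when $p\in C_{xy}$. If some point of $S\setminus\{x,y\}$ lies on $C_{xy}$, we are done. Otherwise $f$ has constant sign on each of $\alpha$ and $\beta$, because each arc is connected.

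The main obstacle is ruling out this remaining case, and the natural tool is a different family of right triples. For $p\in S\setminus\{x,y\}$, both angles $\angle pxy$ and $\angle pyx$ are acute. Indeed, if $\angle pxy\ge\frac{\pi}{2}$, then $\|p-y\|>\|x-y\|=\mathrm{diam}\,S$, which is impossible. So every point $p$ lies strictly between the hyperplanes $H_{xy}$ and $H_{yx}$. I would then vary the right angle's vertex instead of fixing the hypotenuse. Take a point $p\in\alpha$ and consider the function $g(q)=\langle p-x,\,q-p\rangle$ for $q\in S$, where the vertex is $p$ and the candidate legs are $px$ and $pq$. We have $g(x)=-\|p-x\|^2<0$. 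If $f(p)<0$, so that $p$ lies inside the ball of diameter $xy$, then $\angle xpy>\frac{\pi}{2}$, hence $g(y)=\langle p-x,y-p\rangle>0$. By connectedness of $\beta\cup\{x,y\}$, a path from $x$ to $y$, there is then a point $q\in\beta$ with $g(q)=0$. Such a $q$ is distinct from $x$ and $p$, and $\{x,p,q\}$ is a right triple with the right angle at $p$. If instead $f<0$ fails on both arcs, then every point other than $x,y$ lies strictly outside $C_{xy}$. In that case I would pick $p\in\alpha$ and $q\in\beta$ and use the intermediate value theorem on the angle at $x$: the function $h(q)=\langle p-x,\,q-x\rangle$ for $q$ moving along the arc from $x$. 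Alternatively, I would rechoose the pair realizing the diameter.

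If this last case resists, the fallback is a cleaner global argument. For a fixed $p\in S\setminus\{x,y\}$, the set of points $q$ with $\angle xpq=\frac{\pi}{2}$ is the hyperplane through $p$ orthogonal to $px$. The arc of $S$ from $x$ through $y$ that avoids $p$ starts on one side of this hyperplane, at $x$, where $g<0$. So it suffices to find some point of $S$ with $g>0$, and the point $y$ gives this whenever $\angle xpy>\frac{\pi}{2}$. Otherwise, all points satisfy $\angle xpy<\frac{\pi}{2}$. Then I choose $p$ to be a point of $S$ farthest from the line $\overline{xy}$ and apply the same hyperplane argument with vertex $x$ in place of $p$: the points of $S$ near $x$ on the two arcs make angles at $x$ whose cosines with respect to the direction $y-x$ vary continuously. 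I expect a careful choice of the vertex and one application of the intermediate value theorem on the connected arcs to finish the proof, with the case analysis around $C_{xy}$ being the delicate step.
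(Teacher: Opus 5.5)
Your first two cases are correct and coincide with the paper's argument. A point of $S$ on $C_{xy}\setminus\{x,y\}$ gives a right triple directly. A point $p\in S$ in the open ball of diameter $xy$ gives one because the hyperplane through $p$ orthogonal to $p-x$ separates $x$ from $y$; this is the paper's Case~1.

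The gap is the third case, where $S\setminus\{x,y\}$ lies strictly outside the closed ball of diameter $xy$. There you only list options: an intermediate value argument for $h$, rechoosing the diameter pair, or a point farthest from $\overline{xy}$. You do not show that any of them yields a right triple, and your final sentence concedes this. Choosing a diameter pair does not make this case vacuous. In the plane, take $x=(-1,0)$, $y=(1,0)$ and the polar arcs $r=1+\varepsilon\sin^2\theta$, $0\le\theta\le\pi$, with $\varepsilon=\frac{1}{20}$ and $\varepsilon=\frac{1}{10}$. Together they form a crescent-shaped simple closed curve whose diameter is attained only by the pair $\{x,y\}$, while every other point lies strictly outside the closed unit disk. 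So rechoosing the diameter pair cannot help. Moreover, in this case $\angle xpy<\frac{\pi}{2}$ for every $p$, so $g(y)<0$ for every choice of vertex $p$, and $y$ never supplies the sign change your hyperplane argument needs.

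The missing idea, which is the paper's Case~2, is to give up $y$ and work near $x$.
\begin{enumerate}
\item Take $b_n\in S\setminus\{x\}$ with $b_n\to x$, and pass to a subsequence along which $(b_n-x)/\|b_n-x\|$ converges.
\item Fix $s$ large. For all sufficiently large $n$ you get $\angle b_nxb_s<\frac{\pi}{4}$ and $\|b_n-x\|<\|b_s-x\|/\sqrt{2}$.
\item Hence $\|b_n-x\|<\|b_s-x\|\cos\angle b_nxb_s$. This says $b_n$ lies in the open ball of diameter $xb_s$, i.e.\ $\angle xb_nb_s>\frac{\pi}{2}$.
\item Now run your own hyperplane argument for the pair $(x,b_s)$ with vertex $z=b_n$. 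Use the connected set $S\setminus\{z\}$, which is an arc containing both $x$ and $b_s$.
\item The points $x$ and $b_s$ lie strictly on opposite sides of the hyperplane through $z$ orthogonal to $z-x$. So this hyperplane meets $S$ in some $q\neq z$, with $q\ne x$, and $\{x,z,q\}$ is a right triple.
\end{enumerate}
This works for every simple closed curve, so the diameter choice and the case analysis around $C_{xy}$ are not actually needed.
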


	\begin{proof}
	Let $J$ be a simple closed curve in $\mathbb{R}^d$ and $x,y\in J$. If $W_{xy}\cap J\ne \emptyset $, then we are done. Suppose now that $W_{xy}\cap J= \emptyset $.
	
	Case 1. If $(\mathrm{int}~\mathrm{conv}{C}_{xy})\cap J\ne \emptyset$, then for any point $z\in (\mathrm{int}~\mathrm{conv}{C}_{xy})\cap J$, we have $H_{zx}\cap J\setminus \{z\}\ne \emptyset$, since $x$ and $y$ are on different sides of $H_{zx}$. Thus, for $a\in H_{zx}\cap J\setminus \{z\}$, $\{a,z,x\}$ forms a right triple.

	Case 2. If $(\mathrm{int}~\mathrm{conv}{C}_{xy})\cap J=\emptyset$, then we consider in $J$ a sequence of points ${\{{b}_{n}\}}_{n=1}^{\infty}$ converging to $x$.

This sequence has a subsequence ${\{{b}_{{n}_{m}}\}}_{m=1}^{\infty}$ such that $\frac{{b}_{{n}_{m}}-x}{\left \| {b}_{{n}_{m}-x}\right \|}$ converges, say, to $\tau $. So, w.l.o.g. we suppose that that subsequence is ${\{{b}_{n}\}}_{n=1}^{\infty}$ itself. Thus, for some index $s$ and any $n\geq s$, $\angle {b}_{n}x{b}_{s}< \frac{\pi }{4}$. This yields ${b}_{n}\in{C}_{x{b}_{s}}$, and we find ourselves in Case 1 with respect to $x, {b}_{s}$.
\end{proof}

	\begin{Theorem}\label{7}
	Simple closed curves in $\mathbb{R}^d$ don't have the right-$3$-point property.
\end{Theorem}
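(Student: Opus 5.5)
We want to show that no simple closed curve $J\subset\mathbb{R}^d$ has the right-$3$-point property. That is, we must exhibit a right triple $\{a,b,c\}\subset J$ none of whose three line-segments lies in $J$.

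The starting point is Lemma \ref{13}, which guarantees right triples exist in $J$. These may, however, have sides lying in $J$, for instance if $J$ contains a staircase-like piece. So we first localize at a point where $J$ is not straight. If $J$ contains no line-segment at all, then any right triple given by Lemma \ref{13} already witnesses the failure, and we are done. So the main case is when $J$ contains segments.

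\textbf{Case $J$ is not a closed broken line.} Suppose $J$ contains some segments but is not a closed broken line. Then there is a point $p\in J$ with the following property: no neighbourhood of $p$ in $J$ is the union of at most two segments meeting at $p$. Such a point exists; otherwise $J$, being compact, would be covered by finitely many segment neighbourhoods and would be a closed broken line. Near $p$ we want to rerun the argument of Lemma \ref{13}, but inside a small sub-arc $A\subset J$ around $p$ that contains no segment through a chosen pair of points.

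Concretely, pick $x,y\in A$ close to $p$ such that the sub-arc of $J$ between them is not the segment $xy$. Then Case 1 / Case 2 of Lemma \ref{13} produce a right triple $\{a,z,x\}$ in $J$, where $z$ is interior to the ball $\mathrm{conv}\,C_{xy}$ and $a\in H_{zx}$. We still need to ensure that the segments $az$, $zx$, $ax$ are not in $J$. There is freedom here: $z$ ranges over a sub-arc of $J$, and $a$ can be chosen among the points of $H_{zx}\cap J$. Because $J$ is simple, it contains only countably many maximal segments, so by choosing $z$ generically we can make $zx\not\subset J$. A similar genericity argument handles the other two sides.

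\textbf{Case $J$ is a closed broken line.} Here $J$ has finitely many vertices, and we argue directly. Take a vertex $v$ with incident edges $e_1,e_2$ making an angle $\theta<\pi$. Choose $x\in e_1$ and $y\in e_2$ near $v$. Then $xy\not\subset J$, since $J$ is simple and $v$ is a genuine vertex. Now the hyperplane $H_{xy}$ through $x$ meets $J$ in a point $a$ lying on another edge or on $e_2$, and the triple $\{a,x,y\}$ is right-angled at $x$.

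It remains to arrange that $ax\not\subset J$ and $ay\not\subset J$. Moving $x$ continuously along $e_1$ lets us avoid the finitely many degenerate positions in which $a$ and $x$ (or $a$ and $y$) lie on a common edge with the connecting segment inside $J$. Alternatively, we can use a right triple with the right angle at a point interior to an edge. For a point $z$ in the relative interior of an edge $e$, $H$ through $z$ orthogonal to $e$ meets $J$ elsewhere, at some $a$, because $J$ is closed and crosses that hyperplane. Pairing $z$ with a point $w$ on an adjacent edge, not on $e$, gives the triple $\{a,z,w\}$. Then $zw\not\subset J$, and generically $az,aw\not\subset J$.

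\textbf{Main obstacle.} The hardest step is the genericity claim: guaranteeing simultaneously that all three sides avoid $J$. I would handle it by a dimension count. The triples $\{a,z,x\}$ produced form at least a one-parameter family. The bad events, that a side lies in $J$, require two of the points to lie on one of the countably many maximal segments of $J$ with the whole segment between them in $J$. For all but countably many parameter values this cannot happen, so a good triple exists.
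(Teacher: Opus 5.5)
\textbf{There is a genuine gap.} It is the ``genericity'' step that is supposed to keep all three sides of $\{a,z,x\}$ out of $J$. That step is the whole content of the theorem, and your justification for it does not hold.

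First, Lemma~\ref{13} need not produce a one-parameter family at all: its first branch returns the single triple $\{x,y,w\}$ with $w\in W_{xy}$. Second, countability of the maximal segments does not make the set of bad parameters countable. For a fixed segment $\sigma\subset J$, the set of parameters for which two of the three points lie on $\sigma$ is typically open.

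Your polygonal case already shows this. Take an equilateral triangle with vertex $v$ at the origin, $x\in e_1$ at distance $0.1$ from $v$, and $y\in e_2$ at distance $0.5$. The line $H_{xy}$ meets $J\setminus\{x\}$ only at the point of $e_2$ at distance $1/30$ from $v$. So $ay\subset J$, and this persists for all nearby $x,y$; the degenerate positions are not finitely many.

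That case is easily repaired. With $\|x-v\|=\|y-v\|$, the hyperplane $H_{xy}$ meets $\overline{e_1}$ only at $x$ and $\overline{e_2}$ only at $2v-y\notin e_2$. A point $a\in H_{xy}\cap J\setminus\{x\}$ exists because $e_1$ crosses $H_{xy}$ at $x$, and every such $a$ lies off $e_1\cup e_2$, so no side of $\{x,y,a\}$ lies in $J$. Your alternative triple $\{a,z,w\}$, by contrast, is not a right triple, since $zw$ is not parallel to $e$.

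In the non-polygonal case your plan does work if some point of $J$ lies on no segment of $J$. Take that point as $x$, so $zx,ax\not\subset J$ automatically. Then take $z$ either on no segment, or in the relative interior of a segment $\sigma$ not orthogonal to $zx$, so that $H_{zx}$ meets $\overline{\sigma}$ only at $z$.

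However, every point of $J$ may lie on a segment while $J$ has infinitely many edges. For example, take a segment ending at $p$ on one side and ever shorter zigzag edges accumulating at $p$ on the other. Then your $x$ lies on some segment $\tau\subset J$. Nothing in your argument prevents the point $a$ produced by separation from lying on $\tau$ for a whole interval of choices of $z$. Localizing near $p$ does not help, since $a$ can land anywhere on $J$.

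\textbf{The missing idea} is to put the right angle at a point where $J$ is rigid: a relative interior point $y$ of a segment $ab\subset J$. Every segment of $J$ through $y$ lies on $\overline{ab}$, so legs at $y$ off $\overline{ab}$ are automatically not in $J$, and no genericity is needed.

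This is the paper's proof, phrased as a contradiction:
\begin{itemize}
\item Lemma~\ref{13} and the right-$3$-point property give a segment $ab\subset S$, maximal at $a$.
\item Let $y$ be its midpoint and take $x\in S\setminus ab$ close to $a$. The hyperplane $H_{yx}$ separates $x$ from $b$ and yields $z\in H_{yx}\cap S\setminus\{y\}$.
\item Neither $yx$ nor $yz$ lies in $S$, so the hypotenuse $xz$ must.
\item The foot $u$ of the altitude from $y$ is then a relative interior point of the segment $xz\subset S$. A second separation gives $v\in H_{yu}\cap S\setminus\{y\}$.
\item Rigidity at $y$ and at $u$ shows that none of $uy$, $uv$, $vy$ lies in $S$. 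The line $\overline{xz}$ is parallel to $H_{yu}$ and disjoint from it, so $v\notin\overline{xz}$.
\end{itemize}

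Read directly, this says: if $J$ contains a segment, then $\{x,y,z\}$ or $\{u,y,v\}$ is a right triple with no side in $J$. This holds uniformly for polygonal and non-polygonal curves. Combined with your observation for curves containing no segment, it proves the theorem.
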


\begin{proof}
	\begin{figure}[htbp]
			\centering
			\includegraphics[width=6cm]{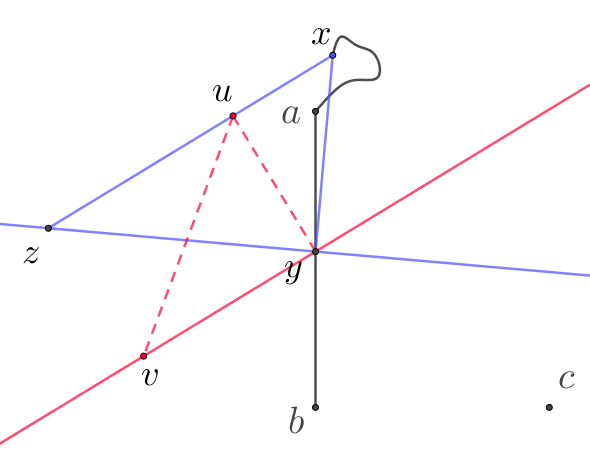}
			\caption{}
			\label{f}
		\end{figure}
	Assume that the simple closed curve $S$ has the right-$3$-point property. Let $\{a,b,c\}$ be a right triple of $S$; its existence is guaranteed by Lemma \ref{13}. At least one of the line-segments $ab,ac,bc$, say $ab$, is included in $S$, see Figure \ref{f}.  Without loss of generality, we suppose that there is no point $e\ne a$, such that $ab\subset be\subset S$. Take the point $x\in S\setminus ab$ close to $a$ and $y=\frac{1}{2}a+\frac{1}{2}b$. Since ${H}_{yx}$ separates $x$ from $b$, there exists a point $z\in {H}_{yx}\cap S\setminus \{y\}$. We have $yx, yz \not \subset S$, since $S$ is a simple closed curve. Thus, $xz\subset S$, by the right-$3$-point property. Let $u$ be the orthogonal projection of $y$ on $\overline{xz}$. Similarly, since ${H}_{yu}$ separates $u$ from $b$, we can find $v\in {H}_{yu}\cap S\setminus \{y\}$. Now, $uy,uv,vy\not\subset S$, and a contradiction is obtained.
\end{proof}

\begin{figure}[htbp]
			\centering
			\includegraphics[width=5cm]{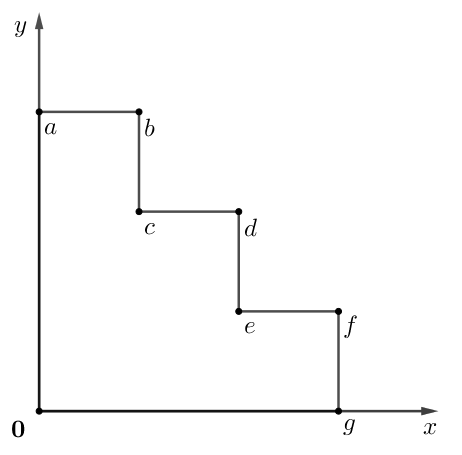}
			\caption{The right-$3$-point property does not imply $P_3$.}
			\label{e}
		\end{figure}

Obviously, if a set has the property $P_3$, then it has the right-$3$-point property. On the other hand, the right-$3$-point property does not imply $P_3$ even for a continuum with nonempty interior of the plane. The example in Figure \ref{e} illustrates this.

\bigskip
\bigskip

{\bf Acknowledgements.} This work was supported by the NSF of China (122711392); the Foreign Experts Program of the People's Republic of China; the Program for Foreign Experts of Hebei Province.

The first author (WL) was supported by the China Scholarship Council (grant No. 202508810001) and the 2026 Provincial Graduate Student Innovation Ability Training and Funding Project (grant No. CXZZBS2026076). The fourth author (TZ) has also been partially supported by the CNRS-IMAR cooperation in the frame of the IRN ECO-Math.

\end{document}